\numberwithin{equation}{section}
\newtheorem{theorem}{Theorem}[section]
\newtheorem{lemma}[theorem]{Lemma}
\newtheorem{proposition}[theorem]{Proposition}
\newtheorem{corollary}[theorem]{Corollary}
\theoremstyle{definition}
\newtheorem{example}[theorem]{Example}
\newtheorem{convention}[theorem]{Convention}
\theoremstyle{remark}
\newtheorem{remark}[theorem]{Remark}
\newcommand{\st}{\mid} % For deciding between \mid and : for sets
\renewcommand{\emptyset}{\varnothing}
\newcommand{\argpp}[1]{\Pi_{#1}} % Preprojective algebra of A_#1
\newcommand{\preproj}{\argpp{n}} % Preprojective algebra of A_n
\newcommand{\argpb}[1]{\overline{\Pi}_{#1}}
\newcommand{\pibar}{\overline{\Pi}_{n}}
\newcommand{\argpris}[1]{\Delta_{#1} \times \Delta_{1}}
\newcommand{\prism}{\argpris{n}} % Cartesian product of n-simplex and 1-simplex
\newcommand{\g}[1]{g^{#1}} % g-vector of tau-rigid pair
\DeclareMathOperator{\Hom}{Hom}
\newcommand{\kp}{\mathcal{K}_{\preproj}} % Shorthand for the bounded homotopy category of projectives
\newcommand{\kpb}{\mathcal{K}_{\pibar}} % Shorthand for the bounded homotopy category of projectives
\newcommand{\kpl}{\mathcal{K}_{\Lambda}} % Shorthand for the bounded homotopy category of projectives
\newcommand{\tcs}[1]{{\tabbedCenterstack{#1}}}
\newcommand{\sbm}[1]{{\let\amp=&\left(\begin{smallmatrix}#1\end{smallmatrix}\right)}}
\newcommand{\word}[1]{\mathrm{word}(#1)} % The word corresponding to a module
\newcommand{\argsymm}[1]{\mathfrak{S}_{#1}}
\newcommand{\symm}{\argsymm{n + 1}} % Symmetric group
\newcommand{\modules}{\mathsf{mod}\,}
\newcommand{\kbproj}{\mathsf{K}^{\sf b}(\proj \Lambda)}
\newcommand{\twoterm}{\mathsf{K}^{[-1,0]}(\proj \Lambda)}
\newcommand{\proj}{\mathsf{proj}\,}
\DeclareMathOperator{\thick}{thick}
\DeclareMathOperator{\conv}{conv}
\newcommand{\sttilt}{\mathsf{s\tau\mbox{-}tilt}\,}
\newcommand{\isttiltp}{\mathsf{ind\mbox{-}\tau\mbox
{-}rigid\mbox{-}pair}\,}
\newcommand{\twosilt}{\mathsf{2\mbox{-}silt}\,}
\newcommand{\itwosilt}{\mathsf{ind\mbox{-}2\mbox{-}psilt}\,}
\newcommand{\seq}{\mathsf{seq}}
\newcommand{\abseq}{\seq_{n + 1}(a, b)}
\newcommand{\tri}{\mathsf{tri}}
\newcommand{\intsimplex}{\mathsf{int\mbox{-}sim}}
\title[Prisms and preprojective algebras]{Triangulations of prisms and\\ preprojective algebras of type~$A$}
\author{Osamu Iyama}
\email{iyama@ms.u-tokyo.ac.jp}
\urladdr{https://www.ms.u-tokyo.ac.jp/~iyama/index.html}
\address{Graduate School of Mathematical Sciences, The University of Tokyo, 3-8-1 Komaba, Meguro-ku, Tokyo 153-8914, Japan}
\author{Nicholas J. Williams}
\email{nicholas.williams@lancaster.ac.uk}
\urladdr{https://nchlswllms.github.io/}
\address{Department of Mathematics and Statistics, Fylde College, Lancaster University, Lancaster, LA1 4YF, United Kingdom}
\subjclass[2020]{05E10, 16G20, 52B12}
\keywords{Product of simplices, prism, triangulations, preprojective algebras, $\tau$-tilting, silting}
\thanks{OI was supported by JSPS Grant-in-Aid for Scientific Research (B) 16H03923, (C) 18K3209 and (S) 15H05738. NJW was supported by a JSPS International Short-Term Postdoctoral Research Fellowship at the University of Tokyo.}
\begin{document}

\begin{abstract}
We show that indecomposable two-term presilting complexes over $\preproj$, the preprojective algebra of $A_{n}$, are in bijection with internal $n$-simplices in the prism $\prism$, the product of an $n$-simplex with a 1-simplex. We show further that this induces a bijection between triangulations of $\prism$ and two-term silting complexes over $\preproj$ such that bistellar flips of triangulations correspond to mutations of two-term silting complexes. These bijections are shown to compatible with the known bijections involving the symmetric group.
\end{abstract}

\maketitle

\section{Introduction}

Cluster algebras are intimately connected with the combinatorics of triangulations. The simplest example of this is the bijection between the clusters in the type~$A$ cluster algebra and triangulations of a convex polygon \cite{fz1}. Further connections were found in the work of Fomin, Shapiro, and Thurston, who defined cluster algebras using tagged triangulations of surfaces \cite{fst}.

Cluster algebras were first connected with the representation theory of finite-dimensional algebras in \cite{mrz}, which led to the definition of cluster categories of hereditary algebras in \cite{bmrrt}. Another approach to categorifying cluster algebras uses the representation theory of preprojective algebras \cite{gls-rigid,gls-aus,gls-mult,gls-flag}. Preprojective algebras have connections with Kleinian singularities \cite{cbh}, Nakajima quiver varieties \cite{nakajima,st}, and crystal bases \cite{ks}.	

Cluster categories were subsequently extended to cluster algebras arising from triangulations of surfaces in \cite{amiot-cluster}. Since then, the relation between triangulated surfaces and representation theory in the form of so-called geometric models has been an active subject of research \cite{bz,ops_geometric,bcs,chang_schroll}.

The triangulations considered thus far are all two-dimensional and it is natural to wonder whether similar phenomena exist in higher dimensions. Indeed, a beautiful connection between representation theory and higher-dimensional triangulations was found in \cite{ot}, where triangulations of even-dimensional cyclic polytopes were shown to be in bijection with cluster-tilting objects in higher cluster categories. In \cite{njw-hst} it was shown how odd-dimensional triangulations enter the picture too, through a bijection with equivalence classes of maximal green sequences.

High-dimensional polytopes are in general very complicated, but, besides cyclic polytopes, another example that is well-studied is the Cartesian product $\Delta_{m} \times \Delta_{n}$ of two simplices. The triangulations of this polytope are interesting for many reasons. The regular triangulations classify the types of tropical polytopes \cite{ds_tc}. Furthermore, the secondary polytope of $\Delta_{m} \times \Delta_{n}$ is the Newton polytope of the product of all minors of an $(m + 1) \times (n + 1)$ matrix \cite{bb_gpm}, as well as the state polytope of the Segre embedding of $\mathbb{P}^{m} \times \mathbb{P}^{n}$ \cite{sturmfels_gbtv}. Products of simplices are also related to transportation polytopes from operations research \cite{lkos}, and arise as strategy spaces for two-player games \cite{lt_simp}.

In this paper, we show that combinatorics of the prism $\prism$ is closely related to the representation theory of $\preproj$, the preprojective algebra of $A_{n}$.

\begin{theorem}[{Proposition~\ref{prop:indec_bij}, Corollary~\ref{cor:triang_bij}, Proposition~\ref{prop:flip}}]
There is a bijection between codimension one internal simplices in $\prism$ and indecomposable two-term presilting complexes over $\preproj$ which induces a bijection between triangulations of $\prism$ and two-term silting complexes over $\preproj$. Mutations of silting complexes correspond to bistellar flips of triangulations.
\end{theorem}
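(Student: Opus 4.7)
The plan is to attack the three sub-results in order (Proposition~\ref{prop:indec_bij}, Corollary~\ref{cor:triang_bij}, Proposition~\ref{prop:flip}), building from indecomposable objects up to maximal configurations and then to mutation. Compatibility with the bijections involving $\symm$ mentioned in the introduction is recovered at the end by composing with Mizuno's bijection between two-term silting complexes over $\preproj$ and $\symm$.

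For the indecomposable bijection, I would first parameterise internal $n$-simplices of $\prism$ combinatorially. An $n$-simplex is spanned by $n+1$ of the $2(n+1)$ prism vertices; writing the chosen vertex in the fibre over $i \in \{0, \ldots, n\}$ as $(i, 0)$ or $(i, 1)$, affine independence forces exactly one vertex per fibre, and the condition of being internal rules out the two constant patterns. Hence internal $n$-simplices are in canonical bijection with non-constant words in $\abseq$. On the algebraic side I would produce for each $w \in \abseq$ an explicit two-term complex $T_w$ over $\preproj$, with summands in degree~$0$ read off from indices where $w_i = a$ and summands in degree~$-1$ read off from indices where $w_i = b$, with differentials built from paths in the doubled quiver of $A_n$. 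Showing that $T_w$ is an indecomposable presilting complex reduces to a direct $\Hom$-computation using the standard resolution of simples over $\preproj$. Bijectivity then follows either by constructing an inverse that reads off $w$ from the summands and differentials of a given indecomposable two-term presilting complex, or by matching cardinalities on both sides.

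For the triangulation/silting correspondence, the key step is to show that two internal $n$-simplices $\sigma_v$ and $\sigma_w$ of $\prism$ meet in their relative interiors if and only if the corresponding words $v, w \in \abseq$ violate a combinatorial non-crossing condition read off from the explicit simplices, and that this condition holds precisely when $\Hom(T_v, T_w[1]) = 0 = \Hom(T_w, T_v[1])$ in $\kbprojpp$. Granted this geometric/algebraic dictionary, a triangulation of $\prism$ is a maximal set of pairwise compatible internal $n$-simplices, and so corresponds to a maximal set of pairwise compatible indecomposable two-term presilting complexes, which is exactly a two-term silting complex. The flip/mutation match then falls out: two triangulations differ by a bistellar flip exactly when they share all but one internal $n$-simplex, equivalently when the associated silting complexes differ in a single indecomposable summand. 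To identify the two operations precisely, I would write out the exchange triangle for the mutated summand and compare it to the combinatorics of a single letter flipping from $a$ to $b$ in the word model.

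The main obstacle I expect is the construction and analysis of the $T_w$: producing the complexes in a way that makes the presilting condition transparent, given that $\Hom$-spaces over $\preproj$ are not as well behaved as over hereditary algebras. A promising route is to leverage Mizuno's bijection: taking a permutation $\sigma \in \symm$ and its associated two-term silting complex with known explicit indecomposable summands, one decomposes $\sigma$ according to the positions of $a$s and $b$s to extract the summand attached to each word $w \in \abseq$, bypassing an ab initio $\Hom$-vanishing argument. Verifying that the non-crossing condition on $\abseq$ matches compatibility in both the geometric and algebraic senses is then the remaining combinatorial task, and once established it delivers all three sub-results simultaneously.
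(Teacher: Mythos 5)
Your overall architecture (indecomposables $\to$ compatibility $\to$ maximal collections $\to$ mutation/flip) matches the paper's, and your combinatorial parameterisation of internal $n$-simplices by words in $\abseq$ is essentially Lemma~\ref{prop:simp_desc} (though note it is internality, not affine independence, that forces one vertex per fibre: choosing both $a_i$ and $b_i$ leaves some fibre $j$ unused and puts the simplex inside the facet $V \setminus \{a_j, b_j\}$). However, there are two genuine gaps. First, your recipe for $T_w$ is wrong as stated: the indecomposable summands are not indexed by the positions of the letters $a$ and $b$ but by consecutive letter pairs --- $P_i$ sits in degree $0$ exactly when $x_{i-1}x_i = ab$ and in degree $-1$ exactly when $x_{i-1}x_i = ba$ (Proposition~\ref{prop:g_vec}); your version would produce $n+1$ summands for every word, which already fails for the projectives $0 \to P_i$ with word $a^i b^{n+1-i}$. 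The paper sidesteps an ab initio construction entirely by quoting Mizuno's classification of indecomposable $\tau$-rigid $\preproj$-modules as submodules of indecomposable injectives, i.e.\ Young diagrams, whose upper contours give the words --- close to your proposed fallback via Mizuno, but applied at the level of modules rather than by slicing permutations.

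The more serious gap is that the technical heart of the theorem --- proving that $\Hom_{\kp}(T_v, T_w[1]) = 0 = \Hom_{\kp}(T_w, T_v[1])$ if and only if the words $v,w$ satisfy the non-crossing condition --- is deferred to ``the remaining combinatorial task'' with no workable strategy, and a direct $\Hom$-computation over $\preproj$ is genuinely hard because $\preproj$ is not representation-finite in the relevant sense and its indecomposable two-term complexes are unwieldy. The paper's key idea, which your proposal is missing, is to pass to the quotient $\pibar = \preproj/\langle z \rangle$ by the two-cycles: by a theorem of Eisele--Janssens--Raedschelders the functor $-\otimes_{\preproj}\pibar$ preserves the relevant $\Hom$-vanishing, and over $\pibar$ every indecomposable module is a $\tau$-rigid brick whose submodules, factor modules and Auslander--Reiten translate can be read off from the word (Lemmas~\ref{lem:pibar_sub_fac} and~\ref{lem:pibar_tau}), yielding the criterion of Proposition~\ref{prop:ext}. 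Without this reduction (or an equivalent device) your plan does not close. Two smaller points: the passage from maximal compatible sets to silting complexes and to triangulations needs the cited inputs that every two-term presilting complex with $n$ summands is silting and that triangulations of $\prism$ are exactly the size-$n$ pairwise non-crossing sets of internal $n$-simplices; and mutation does not correspond to ``a single letter flipping from $a$ to $b$'' --- that would change the $a$/$b$ content of the word --- but to exchanging an occurrence of $a \cdots b$ for $b \cdots a$ in two positions, matching the circuits $(\{a_i,b_j\},\{a_j,b_i\})$.
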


It is already known from \cite[Chapter~7, Section~3C]{gkz-book} that triangulations of $\prism$ are in bijection with permutations in the symmetric group $\symm$, and from \cite{mizuno-preproj} that permutations in $\symm$ were in bijection with two-term silting complexes over $\preproj$. We show that our results are compatible with these bijections. Indeed, to summarise the results of the paper and how they fit in with the literature, there are bijections
\[
\intsimplex_n(\Delta_n\times\Delta_1) \longleftrightarrow \itwosilt\preproj \longleftrightarrow \isttiltp\preproj
\]
which induce the commutative diagram
\[
\begin{tikzpicture}[xscale=4.3,yscale=2]

\node(perm) at (0,0) {$\symm$};
\node(silt) at (-1,-1) {$\twosilt\preproj$};
\node(ttilt) at (-2,-1) {$\sttilt\preproj$};
\node(triangs) at (0,-1) {$\tri(\prism)$};
\node(itr) at (-2,-2) {$P_{n}(\isttiltp\preproj)$};
\node(ipre) at (-1,-2) {$P_{n}(\itwosilt\preproj)$};
\node(simp) at (0,-3) {$P_{n}(\intsimplex_n(\prism))$};

\draw[<->] (perm) -- (ttilt) node[midway,above] {\cite{mizuno-preproj}};
\draw[<->] (ttilt) -- (silt) node[midway,below] {\cite{air}};
\draw[<->] (silt) -- (triangs) node[midway,above] {Corollary~\ref{cor:triang_bij}};
\draw[<->] (triangs) -- (perm);
\draw[->,dashed] (ttilt) -- (itr);
\draw[->,dashed] (silt) -- (ipre);
\draw[->,dashed] (triangs) -- (simp);
\draw[<->] (itr) -- (ipre);
\draw[<->] (ipre) -- (simp);
\draw[<->] (simp) -- (itr);

\node at (-1.45,-1.8) {\cite{air}};
\node at (-0.2,-0.5) {\cite{gkz-book}};
\node at (-0.35,-2.3) {Proposition~\ref{prop:indec_bij}};
\node at (-1.2,-2.7) {Proposition~\ref{prop:indec_bij}};

\end{tikzpicture}
\]
Here
\begin{itemize}
\item $\sttilt\preproj$ is the set of basic support $\tau$-tilting pairs over $\preproj$;
\item $P_{n}(\isttiltp\preproj)$ is the set of size-$n$ sets of indecomposable $\tau$-rigid pairs over $\preproj$;
\item $\twosilt\preproj$ is the set of basic two-term silting complexes over $\preproj$;
\item $P_{n}(\itwosilt\preproj)$ is the set of size-$n$ sets of indecomposable two-term presilting complexes over $\preproj$;
\item $\tri(\prism)$ is the set of triangulations of $\prism$;
\item $P_{n}(\intsimplex_n(\prism))$ is the set of size-$n$ sets of internal $n$-simplices in $\prism$;
\item $\longleftrightarrow$ arrows denote bijections;
\item $\dashrightarrow$ arrows denote direct sum decompositions, or decompositions into internal $n$-simplices.
\end{itemize}

This paper is structured as follows. We give background to the paper in Section~\ref{sect:back}, first covering representation theory and, in particular, $\tau$-tilting theory, preprojective algebras of type~$A$, and their relation with $\symm$. We then cover background on triangulations of $\prism$ and their relation with $\symm$. We prove our results in Section~\ref{sect:res}, first showing the bijection between internal simplices in $\prism$ and indecomposable two-term presilting complexes over $\preproj$. We show how this induces a bijection between triangulations of $\prism$ and two-term silting complexes over~$\preproj$. In order to do this, we make use of a certain factor algebra $\pibar$ of $\preproj$ and prove some intermediate lemmas concerning its representation theory. Finally, we show that our bijections are compatible with the bijections with $\symm$.

\section{Background}\label{sect:back}

\subsection{Representation theory}

In this section, by $\Lambda$ we mean a finite-dimensional algebra over a field $K$, and we write $\modules \Lambda$ for the category of right $\Lambda$-modules. We use $\tau$ to denote the Auslander--Reiten translate in $\modules \Lambda$. We denote by $\kpl := \kbproj$ the homotopy category of bounded complexes in $\proj\Lambda$.

\subsubsection{Support $\tau$-tilting pairs}

$\tau$-tilting theory was introduced in \cite{air} as a generalisation of cluster-tilting theory. A $\Lambda$-module $M$ is called \emph{$\tau$-rigid} if $\Hom_{\Lambda}(M,$ $\tau M) = 0$. A pair $(M, P)$ of $\Lambda$-modules where $P$ is projective is called \emph{$\tau$-rigid} if $M$ is $\tau$-rigid and $\Hom(P, M) = 0$. A $\tau$-rigid pair $(M, P)$ is called \emph{support $\tau$-tilting} if $|M| + |P| = |\Lambda|$, where $|X|$ denotes the number of non-isomorphic indecomposable direct summands of $X$. Here $M$ is called a \emph{support $\tau$-tilting module}. We call a $\tau$-rigid pair \emph{indecomposable} if it is of the form $(M, 0)$ with $M$ indecomposable, or of the form $(0, P)$ with $P$ indecomposable. We refer to $\tau$-rigid pairs of the form $(0, P)$ as \emph{shifted projectives}. We write $\sttilt\Lambda$ for the set of (isomorphism classes of) basic support $\tau$-tilting pairs over $\Lambda$ and $\isttiltp\Lambda$ for the set of (isomorphism classes of) basic indecomposable $\tau$-rigid pairs over $\Lambda$.

\subsubsection{Two-term silting}\label{sect:back:rep:tts}

Two-term silting complexes over $\Lambda$ are in bijection with support $\tau$-tilting pairs over $\Lambda$ and are in many ways nicer to work with \cite{air}. An object $T$ of $\kpl$ is \emph{presilting} if \[\Hom_{\kpl}(T, T[i]) = 0\] for all $i > 0$. A presilting complex $T$ is \emph{silting} if, additionally, $\thick T = \kpl$. Here $\thick T$ denotes the smallest full subcategory of $\kpl$ which contains $P$ and is closed under cones, $[\pm 1]$, direct summands, and isomorphisms. For a two-term complex $T$ to be presilting, it suffices that $\Hom_{\kpl}(T, T[1]) = 0$. Moreover, for a presilting two-term complex $T$ to be silting, it suffices that $|T| = |\Lambda|$ by \cite[Proposition~3.3(b)]{air}. We write $\twosilt\Lambda$ for the set of (isomorphism classes of) two-term silting complexes over $\Lambda$ and $\itwosilt\Lambda$ for the set of (isomorphism classes of) indecomposable two-term presilting complexes over $\Lambda$.

Given two-term silting complexes $T, T' \in \twoterm$, we say that $T'$ is a \emph{mutation} of $T$ if and only if $T = E \oplus X, T' = E \oplus Y$, where $X$ and $Y$ are indecomposable with $X \not\cong Y$.

Adachi, Iyama, and Reiten showed that there was a bijection between $\itwosilt\Lambda$ and $\isttiltp\Lambda$ which induced a bijection between $\twosilt\Lambda$ and $\sttilt\Lambda$ \cite[Theorem~3.2]{air}. Here, if $(M, P)$ is an indecomposable support $\tau$-rigid pair, then $P^{-1} \oplus P \to P^{0}$ is a two-term presilting complex, where $P^{-1} \to P^{0}$ is a minimal projective presentation of $M$.

\subsubsection{Preprojective algebras of type~$A$}

The algebras we are interested in are the preprojective algebras of type~$A$, which are defined as follows using quivers and relations. Preprojective algebras were originally defined by Gel{\cprime}fand and Ponomarev \cite{gp}. They were subsequently generalised to non-simply-laced types in \cite{gls_found}. The preprojective algebra of~$A_{n}$, denoted $\preproj$, has quiver $Q_{n}$ \[
\begin{tikzcd}
1 \ar[r,shift left,"\alpha_{1}"] & 2 \ar[l,shift left,"\beta_{1}"] \ar[r,shift left,"\alpha_{2}"] & 3 \ar[l,shift left,"\beta_{2}"] \ar[r,shift left,"\alpha_{3}"] & \cdots \ar[l,shift left,"\beta_{3}"] \ar[r,shift left,"\alpha_{n - 2}"] & n - 1 \ar[r,shift left,"\alpha_{n - 1}"] \ar[l,shift left,"\beta_{n - 2}"] & n \ar[l,shift left,"\beta_{n - 1}"] 
\end{tikzcd}
\] with relations $\beta_{i}\alpha_{i} = \alpha_{i + 1}\beta_{i + 1}$ for $i \in \{1, 2, \dots, n - 1\}$, and $\alpha_{1}\beta_{1} = \beta_{n - 1}\alpha_{n - 1} = 0$. We compose arrows using the convention $\xrightarrow{\gamma}\xrightarrow{\delta} = \gamma\delta$. We denote the idempotent at vertex $i$ by $e_{i}$ and the indecomposable projective $\preproj$-module at vertex~$i$ by $P_{i}$. The module $P_{i}$ is also the indecomposable injective at vertex $n - i + 1$.

In order to study the $\tau$-tilting theory of $\preproj$, it will be useful to introduce a particular quotient of it. We let \[z = \sum_{i = 1}^{n-1}\alpha_{i}\beta_{i}\] in $\preproj$. This is a central element of $\preproj$. We define
\[\pibar = \preproj/\langle z \rangle.\]
Then $\pibar$ is the quotient of $\preproj$ by the ideal generated by all two-cycles. That is, $\pibar = \preproj/I$, where $I = \langle \alpha_{i}\beta_{i}, \beta_{i}\alpha_{i} \st i \in \{1, 2, \dots, n - 1\} \rangle$. This algebra was considered in \cite{ejr,bcz,dirrt}. Denote by $\overline{P}_{i}$ the projective $\pibar$-module at vertex $i$. By \cite[Theorem~11]{ejr}, the functor $-\otimes_{\preproj}\pibar\colon \kp \to \kpb$ induces a bijection $\twosilt\preproj \to \twosilt\pibar$. Given a two-term silting complex $P^{\bullet}$ of $\preproj$, we write $\overline{P}^{\bullet}$ for the corresponding two-term silting complex over $\pibar$.

\subsubsection{$g$-vectors}

Let $P^{\bullet} = P^{-1} \to P^{0}$ be a two-term complex over projectives over $\preproj$, where $P^{0} = \bigoplus_{i = 1}^{n}P_{i}^{r_{i}}$ and $P^{-1} = \bigoplus_{i = 1}^{n}P_{i}^{s_{i}}$. Then the \emph{$g$-vector} of $P^{\bullet}$ is \[g^{P^{\bullet}} := (r_{1} - s_{1}, r_{2} - s_{2}, \dots, r_{n} - s_{n}).\]
Note that this is the class of $P^{\bullet}$ in the Grothendieck group $K_0(\kp) = K_0(\proj\preproj)$, identified with $\mathbb{Z}^n$.
Given a $\tau$-rigid pair $(M, P)$, we define $g^{(M, P)} := g^{P^{\bullet}}$ where $P^{\bullet}$ is the corresponding two-term presilting complex. It is known that two-term presilting complexes and support $\tau$-rigid pairs are uniquely determined by their $g$-vectors \cite[Theorem~5.5]{air}.

\subsubsection{Relation with permutations}\label{sect:back:rep:perm}

It was shown in \cite{mizuno-preproj} (see also \cite{birs}) that support $\tau$-tilting modules over $\preproj$ were in bijection with permutations in $\symm$. This was generalised to non-simply-laced types in \cite{fg,murakami}. The bijection is constructed as follows. Define $I_{i} := \preproj (1 - e_{i}) \preproj$, the principal two-sided ideal of $\preproj$ generated by $(1 - e_{i})$. Then, for $w \in \symm$ with reduced expression $w = s_{i_{1}}s_{i_{2}} \dots s_{i_{k}}$, define
\[I_{w} = I_{i_{1}}I_{i_{2}} \dots I_{i_{k}}.\]
The map $w \mapsto I_{w}$ then defines a bijection from $\symm$ to the support $\tau$-tilting modules over $\preproj$. In the case $w = e$, the identity permutation, the corresponding support $\tau$-tilting module is the regular module $\preproj$. We write $P_{w}$ for the projective module $P_{w}$ such that $(I_{w}, P_{w})$ is a support $\tau$-tilting pair.

\begin{convention}
We shall use the convention that the base set for the symmetric group $\symm$ is $\{0, 1, \dots, n\}$ and that the simple reflection $s_{i}$ is given by the transposition $(i - 1\ i)$.
\end{convention}

\subsection{Convex polytopes}

We now give background on the side of the paper concerning triangulations of the prism $\prism$.

\subsubsection{Prisms of simplices}

We now explain the relevant background on prisms of simplices, following \cite[Section~6.2.1]{lrs}. The \emph{prism} of the $n$-simplex $\Delta_{n}$ is the polytope $\prism$. The particular geometric realisation of this polytope is not important for the results of this paper, but for the sake of clarity, we take $\prism$ to be the convex hull of the points given by the column vectors of the $(n + 3) \times (2n + 2)$ matrix \[
\begin{pmatrix}
1 & 0 & \dots & 0 & 1 & 0 & \dots & 0 \\
0 & 1 & \dots & 0 & 0 & 1 & \dots & 0 \\
\vdots & \vdots & \ddots & \vdots & \vdots & \vdots & \ddots & \vdots \\
0 & 0 & \dots & 1 & 0 & 0 & \dots & 1 \\
1 & 1 & \dots & 1 & 0 & 0 & \dots & 0 \\
0 & 0 & \dots & 0 & 1 & 1 & \dots & 1
\end{pmatrix}.
\] We label the points given by the first $n + 1$ columns of this matrix by $a_{0}, a_{1}, \dots, a_{n}$ and the points given by the last $n + 1$ columns by $b_{0}, b_{1}, \dots, b_{n}$. We write $V = \{a_{0}, a_{1}, \dots, a_{n}, b_{0}, b_{1}, \dots, b_{n}\}$ for the set of vertices of the prism $\prism$. One can specify $k$-simplices in the prism $\prism$ by giving subsets of $V$ of size $k + 1$, provided the chosen vertices are affinely independent. An \emph{internal} $k$-simplex is one which does not lie in the boundary of $\prism$. We write $\intsimplex_{n}(\prism)$ for the set of internal $n$-simplices of $\prism$.

\subsubsection{Triangulations}

We again follow \cite{lrs}. A \emph{polyhedral subdivision} $\mathcal{S}$ of $\prism$ is a set of $(n + 1)$-dimensional convex polytopes $\{L_{1}, \dots, L_{r}\}$ such that
\begin{itemize}
\item $\bigcup_{i = 1}^{r} L_{i} = \prism$ and
\item $L_{i} \cap L_{j}$ is a face of both $L_{i}$ and $L_{j}$ for all $i, j$.
\end{itemize}
A polyhedral subdivision $\mathcal{T}$ is a \emph{triangulation} if $L_{i}$ is a simplex for every $i$. We write $\tri(\prism)$ for the set of triangulations of $\prism$.

One polyhedral subdivision $\mathcal{S} = \{L_{1}, L_{2}, \dots, L_{r}\}$ \emph{refines} another polyhedral subdivision $\mathcal{S}' = \{L'_{1}, L'_{2}, \dots, L'_{r'}\}$ if for every $L_{i}$ we have $L_{i} \subseteq L'_{j}$ for some $j$, and $\mathcal{S} \neq \mathcal{S}'$. A polyhedral subdivision is an \emph{almost triangulation} if all of its refinements are triangulations. Two triangulations $\mathcal{T}$ and $\mathcal{T}'$ are related by a \emph{bistellar flip} if there is an almost triangulation $\mathcal{S}$ whose only two refinements are $\mathcal{T}$ and $\mathcal{T}'$.

\begin{remark}
Bistellar flips are the generalisation of the operation of flipping a diagonal inside a quadrilateral in a triangulated convex polygon. Here, the almost triangulation is given by a subdivision consisting of triangles and one quadrilateral. The two triangulations of the quadrilateral then give the two refinements of this subdivision which are the triangulations related by a bistellar flip.
\end{remark}

A \emph{circuit} of $\prism$ is a pair $(Z, Z')$ of two disjoint vertex subsets $Z$ and $Z'$ such that $\conv(Z) \cap \conv(Z') \neq \emptyset$ and such that $Z$ and $Z'$ are minimal with respect to this property. Here `$\conv$' denotes the convex hull. The circuits of $\prism$ are given by $(\{a_{i}, b_{j}\}, \{a_{j}, b_{i}\})$ for $i \neq j$.

\subsubsection{Relation with permutations}\label{sect:back:triang:perm}

Triangulations of $\prism$ are in bijection with permutations in the symmetric group $\symm$, as shown in \cite[Chapter~7, Section~3C]{gkz-book}. Indeed, for a permutation $w = i_{0}i_{1} \dots i_{n} \in \symm$, we have that \[\{\{a_{i_{0}}, \dots, a_{i_{j}}, b_{i_{j}}, \dots, b_{i_{n}}\} \st 0 \leqslant j \leqslant n\}\] is the set of $(n + 1)$-simplices of the corresponding triangulation, and all triangulations of $\prism$ arise in this way. For a permutation $w \in \symm$, we write $\mathcal{T}_{w}$ for the corresponding triangulation of $\prism$.

\section{Results}\label{sect:res}

In this section, we show our main result that there is a bijection between the sets $\itwosilt\preproj$ of indecomposable two-term presilting complexes over $\preproj$ and $\intsimplex_{n}(\prism)$ of internal $n$-simplices in $\prism$ which induces a bijection between the sets $\twosilt\preproj$ of two-term silting complexes over $\preproj$ and $\tri(\prism)$ of triangulations of $\prism$.

\subsection{Bijection for simplices}

We begin by showing the first bijection. We write \[\abseq := \{a, b\}^{n + 1} \setminus \{a^{n + 1}, b^{n + 1}\}\] for the set of words of length $n + 1$ in the alphabet $\{a, b\}$ which use both letters. Recall that a \emph{facet} of a polytope is a face of codimension one.

\begin{lemma}\label{prop:simp_desc}
There is a bijection between $\intsimplex_{n}(\prism)$ and $\abseq$.
\end{lemma}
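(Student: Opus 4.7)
The plan is to identify the facets of $\prism$ explicitly and then characterize which $n$-simplices with vertices in $V$ are internal by excluding those contained in some facet. Using the product structure $\prism = \Delta_n \times \Delta_1$, the facets come in two families: the two ``horizontal'' facets $\conv(a_0, \dots, a_n)$ and $\conv(b_0, \dots, b_n)$, corresponding to $\Delta_n \times \{0\}$ and $\Delta_n \times \{1\}$; and the $n+1$ ``vertical'' facets $\conv(\{a_j, b_j \st j \neq i\})$ for $i \in \{0, \dots, n\}$, corresponding to the codimension-one faces of $\Delta_n$ crossed with $\Delta_1$.

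Let $\sigma$ be an $n$-simplex whose $n+1$ vertices lie in $V$. Then $\sigma$ is contained in a horizontal facet if and only if its vertices are either all $a$-type or all $b$-type, and it is contained in the $i$-th vertical facet if and only if neither $a_i$ nor $b_i$ appears among its vertices. Consequently $\sigma$ is internal if and only if both letters appear and every index $i \in \{0, \dots, n\}$ is used. Since $\sigma$ has $n+1$ vertices and there are $n+1$ indices, the second condition forces each index to occur exactly once, so the vertex set of $\sigma$ has the form $\{c_0, c_1, \dots, c_n\}$ with $c_i \in \{a_i, b_i\}$. The map $\sigma \mapsto x_0 x_1 \cdots x_n$ defined by $x_i = a$ if $c_i = a_i$ and $x_i = b$ otherwise lands in $\{a, b\}^{n+1}$, and the first condition says its image is exactly $\abseq$.

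Finally, I would verify that any such choice $\{c_0, \dots, c_n\}$ actually spans a genuine $n$-simplex, i.e.\ that the $c_i$ are affinely independent. Projecting to the $\Delta_n$ factor sends them to the distinct vertices of $\Delta_n$, which are affinely independent, so the $c_i$ are as well. This supplies the inverse map, sending a word in $\abseq$ to the simplex determined by the prescribed $c_i$, and completes the bijection. No serious obstacle is anticipated; the argument reduces to an incidence analysis of the facet structure of a product of two polytopes together with a one-line affine independence check.
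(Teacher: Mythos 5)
Your argument is correct and follows essentially the same route as the paper: identify the $n+3$ facets of $\prism$ via the product structure, deduce that an internal $n$-simplex must use exactly one of $a_i, b_i$ for each $i$ with both letters appearing, and check affine independence. You spell out the counting step (each index used exactly once) and the independence check (projection to the $\Delta_n$ factor) slightly more explicitly than the paper does, but the substance is identical.
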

\begin{proof}
Clearly $\{a_{0}, a_{1}, \dots, a_{n}\}$ and $\{b_{0}, b_{1}, \dots, b_{n}\}$ both correspond to facets of $\prism$. Furthermore, $V \setminus \{a_{i}, b_{i}\}$ corresponds to a facet of $\prism$ for all~$i$, since it is the Cartesian product of a facet of $\Delta_{n}$ with $\Delta_{1}$. Thus, an internal $n$-simplex in $\prism$ is given by choosing $a_{i}$ or $b_{i}$ for each $i$, excluding the cases $\{a_{0}, a_{1}, \dots, a_{n}\}$ and $\{b_{0}, b_{1}, \dots, b_{n}\}$. Such a set of vertices is, moreover, affinely independent, and so indeed has an $n$-simplex as its convex hull. Hence, internal $n$-simplices in $\prism$ correspond bijectively to words of length $n + 1$ in the alphabet $\{a, b\}$ which use both letters.
\end{proof}

Given $X \in \abseq$, we write $\Delta_{X}$ for the corresponding internal $n$-simplex in $\intsimplex_{n}(\prism)$.

\begin{proposition}\label{prop:indec_bij}
The sets $\intsimplex_{n}(\prism)$ and $\isttiltp\preproj$ are in bijection with each other.
Moreover, the sets $\intsimplex_{n}(\prism)$ and $\itwosilt\preproj$ are also in bijection with each other.
\end{proposition}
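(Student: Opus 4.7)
The plan is to build the bijection via $g$-vectors, working through the factor algebra $\pibar$. By the AIR bijection \cite[Theorem~3.2]{air}, the sets $\isttiltp\preproj$ and $\itwosilt\preproj$ are in bijection, so the two stated claims are equivalent; I therefore focus on constructing a bijection $\intsimplex_{n}(\prism) \leftrightarrow \itwosilt\preproj$, and replace $\intsimplex_{n}(\prism)$ by $\abseq$ via Lemma~\ref{prop:simp_desc}.

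To each word $X = x_{0} x_{1} \cdots x_{n} \in \abseq$ I associate the transition vector $g^{X} \in \mathbb{Z}^{n}$ defined, for $1 \leqslant i \leqslant n$, by $g^{X}_{i} = +1$ if $x_{i-1}x_{i} = ab$, $g^{X}_{i} = -1$ if $x_{i-1}x_{i} = ba$, and $g^{X}_{i} = 0$ otherwise. The nonzero entries of $g^{X}$ occur exactly at the transition positions of $X$ and alternate in sign; since $X$ uses both letters, $g^{X} \neq 0$, and the sign of the first nonzero entry recovers $x_{0}$, so $X \mapsto g^{X}$ is injective on $\abseq$. By \cite[Theorem~5.5]{air} indecomposable two-term presilting complexes over $\preproj$ are uniquely determined by their $g$-vectors, so the task reduces to showing that $\{g^{X} : X \in \abseq\}$ coincides with the set of $g$-vectors realised by objects of $\itwosilt\preproj$.

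I plan to verify both inclusions after passing to $\pibar$. Quotienting by the central element $z$ kills every loop in $\preproj$, because the loops at vertices $1$ and $n$ already vanish in $\preproj$ and the commutativity relations $\beta_{i}\alpha_{i} = \alpha_{i+1}\beta_{i+1}$ equate the remaining loops with the summands of $z$; consequently $\pibar$ is a gentle algebra with well-understood string modules. The functor $- \otimes_{\preproj} \pibar \colon \kp \to \kpb$ preserves $g$-vectors, since $P_{i} \otimes_{\preproj} \pibar \simeq \overline{P}_{i}$, and by \cite[Theorem~11]{ejr} it induces a bijection $\twosilt\preproj \leftrightarrow \twosilt\pibar$. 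Passing to indecomposable summands then yields a $g$-vector-preserving bijection $\itwosilt\preproj \leftrightarrow \itwosilt\pibar$, so the remaining task is to enumerate $\itwosilt\pibar$ directly and match the resulting $g$-vectors with $\{g^{X} : X \in \abseq\}$.

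The principal obstacle is carrying out this enumeration inside $\pibar$: identifying which indecomposable string $\pibar$-modules are $\tau$-rigid, computing the shapes of their minimal projective presentations, and confirming that the resulting $g$-vectors exactly exhaust $\{g^{X} : X \in \abseq\}$. This representation-theoretic case analysis is presumably what the \emph{intermediate lemmas concerning the representation theory of $\pibar$} advertised in the introduction are designed to carry out; once they are in place, the bijection follows from the injectivity of $X \mapsto g^{X}$ together with the $g$-vector uniqueness of \cite[Theorem~5.5]{air}.
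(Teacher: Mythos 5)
Your reduction steps are fine---using \cite[Theorem~3.2]{air} to identify the two claims, replacing $\intsimplex_{n}(\prism)$ by $\abseq$ via Lemma~\ref{prop:simp_desc}, and the injectivity of $X \mapsto g^{X}$ on $\abseq$ (first nonzero sign recovers $x_{0}$, transition positions recover the rest) is correct. But the substantive content of the proposition is exactly the step you defer: showing that the $g$-vectors realised by $\itwosilt\preproj$ are precisely the transition vectors $\{g^{X} : X \in \abseq\}$. You state that this enumeration is ``presumably what the intermediate lemmas concerning the representation theory of $\pibar$ are designed to carry out,'' but in the paper those lemmas (Lemma~\ref{lem:pibar_indec} in particular) are \emph{deduced from} Proposition~\ref{prop:indec_bij} and Proposition~\ref{prop:g_vec}, not the other way around; invoking them here would be circular. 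So as written the proof has a genuine gap: no classification of the indecomposable $\tau$-rigid objects, no computation of their projective presentations, and hence no verification of either inclusion between the two sets of $g$-vectors. (A smaller unjustified step: passing from the silting bijection of \cite[Theorem~11]{ejr} to a bijection on \emph{indecomposable presilting} complexes needs the fact that every two-term presilting complex completes to a two-term silting one.)

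The paper's own argument is more direct and avoids $g$-vectors and $\pibar$ entirely at this stage: by \cite{mizuno-preproj} (see also \cite{irrt}) the indecomposable $\tau$-rigid $\preproj$-modules are exactly the submodules of the indecomposable injectives, and the submodules of the injective with socle $i$ are parameterised by Young diagrams in an $(n+1-i)\times i$ rectangle, i.e.\ by lattice paths, i.e.\ by words in $\abseq$ with $i$ letters $b$ in which some $a$ precedes some $b$; the $n$ shifted projectives account for the remaining words $b^{i}a^{n+1-i}$. If you want to salvage your route, you would need to prove independently (e.g.\ via the string-algebra structure of $\pibar$, which you correctly identify as gentle) that every indecomposable $\pibar$-module is $\tau$-rigid, describe the minimal projective presentations of the string modules, and check the resulting $g$-vectors exhaust $\{g^{X}\}$---that is a workable but genuinely different and longer proof, and none of it is present yet.
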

\begin{proof}
We argue in terms of indecomposable $\tau$-rigid pairs. It follows from \cite{mizuno-preproj} that the indecomposable $\tau$-rigid modules over $\preproj$ correspond to submodules of indecomposable injectives---see also \cite{irrt}. Thus, the indecomposable $\tau$-rigid modules with socle $i$ correspond to Young diagrams lying in an $(n + 1 - i) \times i$ grid, rotated so that the north-west corner of the Young diagram is lying at the bottom. Such Young diagrams are determined by the path given by their upper contour. Let us label upwards steps in the path by $a$ and downwards steps in the path by $b$ and orient the paths from left to right. Such Young diagrams correspond to words in the alphabet $\{a, b\}$ with $i$ `$b$'s and $n + 1 - i$ `$a$'s, such that at least one `$a$' precedes a `$b$'. This can be seen in Example~\ref{ex:indec_bij} and Figure~\ref{fig:indec_bij}.

We associate the shifted projective with top $i$ to the word given by $i$ `$b$'s followed by $n + 1 - i$ `$a$'s. Such a word gives an empty Young diagram, and so is not included in the words corresponding to the $\tau$-rigid modules.

We obtain that the indecomposable $\tau$-rigid pairs over $\preproj$ are in bijection with $\abseq$, and so are in bijection with the internal $n$-simplices in $\prism$ by Proposition~\ref{prop:simp_desc}.

By \cite{air}, we obtain the second statement.
\end{proof}

Given a indecomposable $\tau$-rigid pair $(M, 0)$, we write $\word{M, 0}$ for the corresponding word in the alphabet $\{a, b\}$, usually abbreviating this to $\word{M}$. We do likewise for $\word{0, P}$.

\begin{remark}
In \cite{irrt}, it was shown that join-irreducible permutations in $\symm$ were in bijection with $\tau$-rigid modules over $\preproj$. The bijection between $\tau$-rigid modules and $\abseq \setminus \{b^k a^{n + 1 - k} \mid 1 \leqslant k \leqslant n\}$ gives a neat way of seeing this. Indeed, the join-irreducible permutations are the ones with precisely one descent. That is, if the permutation is $i_{0}i_{1}\dots i_{n} \in \symm$, there is some $l \in \{1, 2, \dots, n\}$ such that \[i_{0} < \dots < i_{l - 1} > i_{l} < \dots < i_{n}.\] Such a permutation corresponds to the internal $n$-simplex $\Delta_{X}$ in $\prism$ with word $X = x_{0}x_{1} \dots x_{n}$ where $x_{i_{j}} = a$ for $j \in \{0, 1, \dots, l - 1\}$ and $x_{i_{j}} = b$ otherwise. This then gives a $\tau$-rigid module via Proposition~\ref{prop:indec_bij}. Note that the words $b^k a^{n + 1 - k}$ corresponding to shifted projectives do not arise via this construction, since $i_{l - 1} > i_{l}$ guarantees that at least one `$a$' entry lies to the right of a `$b$' entry.
\end{remark}

\begin{example}\label{ex:indec_bij}
We give the example of the indecomposable $\tau$-rigid pairs of $\argpp{2}$, as shown in Figure~\ref{fig:indec_bij}. We use $[1]$ to denote the indecomposable $\tau$-rigid pairs of the form $(0, P)$. The words $baa$ and $bba$ correspond to the indecomposable $\tau$-rigid pairs \[\left(0, \tcs{2\\1}\right)\text{ and }\left(0, \tcs{1\\2}\right).\] The internal 2-simplices in $\Delta_{2} \times \Delta_{1}$ are shown in Figure~\ref{fig:int_simps}. These internal 2-simplices are in the same position in the figure as their corresponding indecomposable $\tau$-rigid pairs in Figure~\ref{fig:indec_bij}.
\end{example}

\begin{figure}
\caption{Indecomposable $\tau$-rigid pairs for $\argpp{2}$}\label{fig:indec_bij}
 \[
\begin{tikzpicture}[scale=0.66]

\begin{scope}[shift={(-3,0)}]

\node at (0,1) {\bf 1};
\node at (1,2) {\bf 2};

\draw (0,0) -- (-1,1) -- (1,3) -- (2,2) -- (0,0);
\draw (0,2) -- (1,1);
\draw[ultra thick,red] (-1,1) -- (1,3) -- (2,2);

\node at (-0.7,1.7) {\color{blue} $a$};
\node at (0.3,2.7) {\color{blue} $a$};
\node at (1.7,2.7) {\color{blue} $b$};

\end{scope}

%--------------------

\begin{scope}[shift={(-3,-4)}]

\node at (0,1) {\bf 1};
\node at (1,2) {\color{gray} 2};

\draw (0,0) -- (-1,1) -- (1,3) -- (2,2) -- (0,0);
\draw (0,2) -- (1,1);
\draw[ultra thick,red] (-1,1) -- (0,2) -- (1,1) -- (2,2);

\node at (-0.7,1.7) {\color{blue} $a$};
\node at (0.7,1.7) {\color{blue} $b$};
\node at (1.3,1.7) {\color{blue} $a$};

\end{scope}

%--------------------

\begin{scope}[shift={(3,-10)}]

\node at (-1,2) {\bf 1};
\node at (0,1) {\bf 2};

\draw (0,0) -- (-2,2) -- (-1,3) -- (1,1) -- (0,0);
\draw (0,2) -- (-1,1);

\draw[ultra thick,red] (-2,4) -- (-1,3) -- (0,4) -- (1,5);

\node at (-1.3,3.7) {\color{blue} $b$};
\node at (-0.7,3.7) {\color{blue} $a$};
\node at (0.3,4.7) {\color{blue} $a$};

\node at (1.4,1.5) {\LARGE $[1]$};

\end{scope}

%--------------------

\begin{scope}[shift={(3,0)}]

\node at (-1,2) {\bf 1};
\node at (0,1) {\bf 2};

\draw (0,0) -- (-2,2) -- (-1,3) -- (1,1) -- (0,0);
\draw (0,2) -- (-1,1);
\draw[ultra thick,red] (-2,2) -- (-1,3) -- (1,1);

\node at (-1.7,2.7) {\color{blue} $a$};
\node at (-0.3,2.7) {\color{blue} $b$};
\node at (0.7,1.7) {\color{blue} $b$};

\end{scope}

%--------------------

\begin{scope}[shift={(3,-4)}]

\node at (-1,2) {\color{gray} 1};
\node at (0,1) {\bf 2};

\draw (0,0) -- (-2,2) -- (-1,3) -- (1,1) -- (0,0);
\draw (0,2) -- (-1,1);
\draw[ultra thick,red] (-2,2) -- (-1,1) -- (0,2) -- (1,1);

\node at (-1.3,1.7) {\color{blue} $b$};
\node at (-0.7,1.7) {\color{blue} $a$};
\node at (0.7,1.7) {\color{blue} $b$};

\end{scope}

%--------------------

\begin{scope}[shift={(-3,-10)}]

\node at (0,1) {\bf 1};
\node at (1,2) {\bf 2};

\draw (0,0) -- (-1,1) -- (1,3) -- (2,2) -- (0,0);
\draw (0,2) -- (1,1);
\draw[ultra thick,red] (-1,5) -- (1,3) -- (2,4);

\node at (-0.3,4.7) {\color{blue} $b$};
\node at (0.7,3.7) {\color{blue} $b$};
\node at (1.3,3.7) {\color{blue} $a$};

\node at (2.4,1.5) {\LARGE $[1]$};

\end{scope}

%--------------------

\end{tikzpicture}
\]
\end{figure}
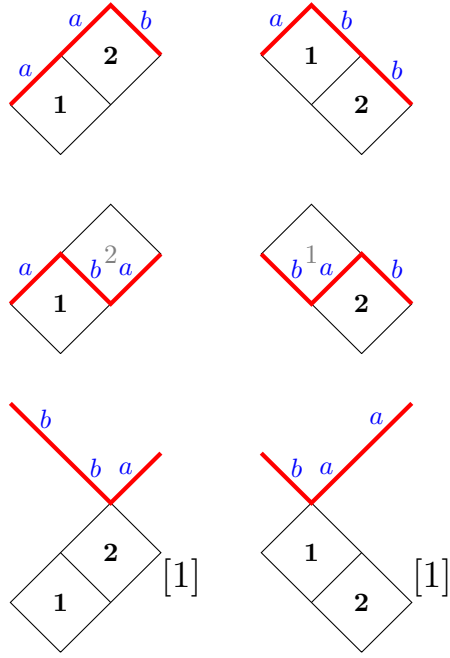

\begin{figure}
\caption{Internal $2$-simplices in $\Delta_{2} \times \Delta_{1}$}\label{fig:int_simps}
\[
\begin{tikzpicture}

\begin{scope}[shift={(-2.5,0)}]

\coordinate (b0) at (0,0);
\coordinate (b1) at (1.5,-1);
\coordinate (b2) at (3,0.5);
\coordinate (a0) at (0,3);
\coordinate (a1) at (1.5,2);
\coordinate (a2) at (3,3.5);

\draw[fill=red,fill opacity=0.5] (a0) -- (a1) -- (b2) -- (a0);

\node [left = 1mm of a0] {$a_{0}$};
\node [below right = 1mm of a1] {$a_{1}$};
\node [below right = 1mm of a2] {$a_{2}$};
\node [left = 1mm of b0] {$b_{0}$};
\node [below right = 1mm of b1] {$b_{1}$};
\node [below right = 1mm of b2] {$b_{2}$};

\draw (a0) -- (a1) -- (a2) -- (a0);
\draw[dashed] (b0) -- (b2);
\draw (b0) -- (b1) -- (b2);
\draw (a0) -- (b0);
\draw (a1) -- (b1);
\draw (a2) -- (b2);

\end{scope}

%%%%%%%%%%%%%%%%%%%%%%%%%%%%%

\begin{scope}[shift={(2.5,0)}]

\coordinate (b0) at (0,0);
\coordinate (b1) at (1.5,-1);
\coordinate (b2) at (3,0.5);
\coordinate (a0) at (0,3);
\coordinate (a1) at (1.5,2);
\coordinate (a2) at (3,3.5);

\draw[fill=red,fill opacity=0.5] (a0) -- (b1) -- (b2) -- (a0);

\node [left = 1mm of a0] {$a_{0}$};
\node [below right = 1mm of a1] {$a_{1}$};
\node [below right = 1mm of a2] {$a_{2}$};
\node [left = 1mm of b0] {$b_{0}$};
\node [below right = 1mm of b1] {$b_{1}$};
\node [below right = 1mm of b2] {$b_{2}$};

\draw (a0) -- (a1) -- (a2) -- (a0);
\draw[dashed] (b0) -- (b2);
\draw (b0) -- (b1) -- (b2);
\draw (a0) -- (b0);
\draw (a1) -- (b1);
\draw (a2) -- (b2);

\end{scope}

%%%%%%%%%%%%%%%%%%%%%%%%%%%%%

\begin{scope}[shift={(-2.5,-6)}]

\coordinate (b0) at (0,0);
\coordinate (b1) at (1.5,-1);
\coordinate (b2) at (3,0.5);
\coordinate (a0) at (0,3);
\coordinate (a1) at (1.5,2);
\coordinate (a2) at (3,3.5);

\draw[fill=red,fill opacity=0.5] (a0) -- (b1) -- (a2) -- (a0);

\node [left = 1mm of a0] {$a_{0}$};
\node [below right = 1mm of a1] {$a_{1}$};
\node [below right = 1mm of a2] {$a_{2}$};
\node [left = 1mm of b0] {$b_{0}$};
\node [below right = 1mm of b1] {$b_{1}$};
\node [below right = 1mm of b2] {$b_{2}$};

\draw (a0) -- (a1) -- (a2) -- (a0);
\draw[dashed] (b0) -- (b2);
\draw (b0) -- (b1) -- (b2);
\draw (a0) -- (b0);
\draw (a1) -- (b1);
\draw (a2) -- (b2);

\end{scope}

%%%%%%%%%%%%%%%%%%%%%%%%%%%%%

\begin{scope}[shift={(2.5,-6)}]

\coordinate (b0) at (0,0);
\coordinate (b1) at (1.5,-1);
\coordinate (b2) at (3,0.5);
\coordinate (a0) at (0,3);
\coordinate (a1) at (1.5,2);
\coordinate (a2) at (3,3.5);

\draw[fill=red,fill opacity=0.5] (b0) -- (a1) -- (b2) -- (b0);

\node [left = 1mm of a0] {$a_{0}$};
\node [below right = 1mm of a1] {$a_{1}$};
\node [below right = 1mm of a2] {$a_{2}$};
\node [left = 1mm of b0] {$b_{0}$};
\node [below right = 1mm of b1] {$b_{1}$};
\node [below right = 1mm of b2] {$b_{2}$};

\draw (a0) -- (a1) -- (a2) -- (a0);
\draw[dashed] (b0) -- (b2);
\draw (b0) -- (b1) -- (b2);
\draw (a0) -- (b0);
\draw (a1) -- (b1);
\draw (a2) -- (b2);

\end{scope}

%%%%%%%%%%%%%%%%%%%%%%%%%%%%%

\begin{scope}[shift={(2.5,-12)}]

\coordinate (b0) at (0,0);
\coordinate (b1) at (1.5,-1);
\coordinate (b2) at (3,0.5);
\coordinate (a0) at (0,3);
\coordinate (a1) at (1.5,2);
\coordinate (a2) at (3,3.5);

\draw[fill=red,fill opacity=0.5] (b0) -- (a1) -- (a2) -- (b0);

\node [left = 1mm of a0] {$a_{0}$};
\node [below right = 1mm of a1] {$a_{1}$};
\node [below right = 1mm of a2] {$a_{2}$};
\node [left = 1mm of b0] {$b_{0}$};
\node [below right = 1mm of b1] {$b_{1}$};
\node [below right = 1mm of b2] {$b_{2}$};

\draw (a0) -- (a1) -- (a2) -- (a0);
\draw[dashed] (b0) -- (b2);
\draw (b0) -- (b1) -- (b2);
\draw (a0) -- (b0);
\draw (a1) -- (b1);
\draw (a2) -- (b2);

\end{scope}

%%%%%%%%%%%%%%%%%%%%%%%%%%%%%

\begin{scope}[shift={(-2.5,-12)}]

\coordinate (b0) at (0,0);
\coordinate (b1) at (1.5,-1);
\coordinate (b2) at (3,0.5);
\coordinate (a0) at (0,3);
\coordinate (a1) at (1.5,2);
\coordinate (a2) at (3,3.5);

\draw[fill=red,fill opacity=0.5] (b0) -- (b1) -- (a2) -- (b0);

\node [left = 1mm of a0] {$a_{0}$};
\node [below right = 1mm of a1] {$a_{1}$};
\node [below right = 1mm of a2] {$a_{2}$};
\node [left = 1mm of b0] {$b_{0}$};
\node [below right = 1mm of b1] {$b_{1}$};
\node [below right = 1mm of b2] {$b_{2}$};

\draw (a0) -- (a1) -- (a2) -- (a0);
\draw[dashed] (b0) -- (b2);
\draw (b0) -- (b1) -- (b2);
\draw (a0) -- (b0);
\draw (a1) -- (b1);
\draw (a2) -- (b2);

\end{scope}

%%%%%%%%%%%%%%%%%%%%%%%%%%%%%

\end{tikzpicture}
\]
\end{figure}
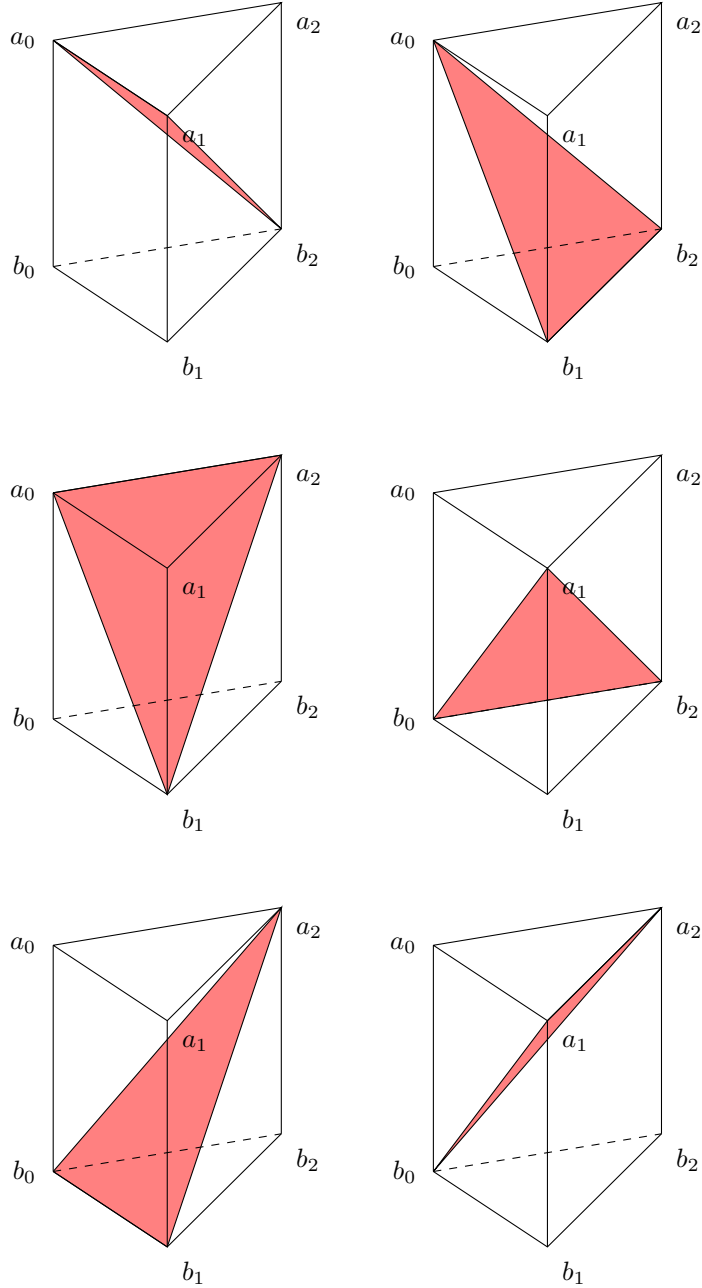

\begin{proposition}\label{prop:g_vec}
Given a $\tau$-rigid pair $(M, P)$ of $\preproj$ corresponding to an internal $n$-simplex $x_{0}x_{1} \dots x_{n}$ in $\prism$, the $g$-vector $\g{(M, P)} = (g_{1}, g_{2}, \dots, g_{n})$ has entries \[
g_{i} = 
	\left\{
		\begin{array}{cl}
		1 & \text{ if } x_{i - 1}x_{i} = ab, \\
		-1 & \text{ if } x_{i - 1}x_{i} = ba, \\
		0 & \text{ otherwise.}
		\end{array}
	\right.
\] 
\end{proposition}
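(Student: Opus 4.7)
The plan is to treat shifted projectives and $\tau$-rigid modules separately, reducing the latter to identifying the tops of $M$ and $\Omega M$ from the Young diagram combinatorics.

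For a shifted projective $(0,P_s)$, the associated two-term presilting complex is $P_s$ concentrated in degree $-1$, so $g^{(0,P_s)} = -e_s$. By Proposition~\ref{prop:indec_bij} the corresponding word is $b^{s} a^{n+1-s}$, whose unique non-constant consecutive pair is $x_{s-1}x_s = ba$, contributing $g_s = -1$; the remaining consecutive pairs $bb$ and $aa$ all contribute $0$. This matches $-e_s$.

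Now let $(M,0)$ be an indecomposable $\tau$-rigid pair with $\mathrm{soc}(M) = S_s$. By the proof of Proposition~\ref{prop:indec_bij}, $M$ corresponds to a non-empty Young diagram $\lambda$ in an $(n+1-s) \times s$ grid whose upper contour is $X$. If $P^{-1} \to P^0 \to M \to 0$ is a minimal projective presentation then $g^{(M,0)} = [P^0] - [P^{-1}]$, so the multiplicity of $P_i$ in $P^0$ (respectively $P^{-1}$) equals the multiplicity of $S_i$ in $\mathrm{top}(M)$ (respectively $\mathrm{top}(\Omega M)$). Hence it suffices to show
\[
\mathrm{top}(M) \cong \bigoplus_{i:\, x_{i-1}x_i = ab} S_i, \qquad \mathrm{top}(\Omega M) \cong \bigoplus_{i:\, x_{i-1}x_i = ba} S_i.
\]

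The crucial observation is that the outer corners of $\lambda$ (positions where the contour makes an $ab$-turn, i.e.\ up-then-down) are in bijection with the minimal generators of $M$, while the inner corners of $\lambda$ (positions where the contour makes a $ba$-turn, i.e.\ down-then-up) are in bijection with the minimal generators of $\Omega M$. Moreover, a corner located in the column of the grid identified with vertex $i$ of $Q_n$ contributes exactly the simple summand $S_i$ to the relevant top. One can establish this by treating $M$ directly as a submodule of the indecomposable injective $I_s = P_{n+1-s}$, whose staircase structure is standard, or more efficiently by passing to the factor algebra $\pibar$, under which $\overline{M} = M \otimes_{\preproj} \pibar$ becomes a string module whose top and syzygy can be read off immediately from $X$. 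The main obstacle is this bookkeeping step---verifying that an outer (respectively inner) corner in grid column $i$ really does yield the summand $S_i$ in $\mathrm{top}(M)$ (respectively $\mathrm{top}(\Omega M)$), with the vertex labels in $Q_n$ matching the contour positions correctly.
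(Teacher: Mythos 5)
Your proposal is correct and takes essentially the same approach as the paper: the paper's proof likewise reads off the summands of $P^{0}$ and $P^{-1}$ as the peaks ($ab$-turns) and troughs ($ba$-turns) of the upper contour of $M$, asserting without further detail exactly the corner-to-top correspondence that you flag as the remaining bookkeeping. The only difference is that you also treat the shifted projectives explicitly, which the paper leaves implicit.
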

\begin{proof}
Consider an indecomposable $\tau$-rigid module $M$ over $\preproj$ with its dimension vector displayed as in Figure~\ref{fig:indec_bij}. Let $P^{-1} \to P^{0}$ be the projective presentation of $M$. The indecomposable projective with top $i$ is a direct summand of $P^{0}$ if and only if $i$ is a peak in the upper contour of $M$. Similarly, the indecomposable projective with top $i$ is a direct summand of $P^{-1}$ if and only if $i$ is a trough of the upper contour of $M$. Since peaks in the upper contour correspond to $ba$ and troughs correspond to $ab$, we obtain the result.
\end{proof}

\begin{remark}\label{rmk:psilt_map}
Proposition~\ref{prop:g_vec} gives the $g$-vector of the indecomposable two-term silting complex $P^{-1} \to P^{0}$ corresponding to an internal $n$-simplex in $\prism$, and so gives the indecomposable summands of $P^{-1}$ and $P^{0}$. The map $P^{-1} \to P^{0}$ can then be described as follows. If the indecomposable summands of $P^{-1}$ are $P_{i_{1}}, P_{i_{2}}, \dots, P_{i_{l}}$, then $P_{0}$ must either have $l - 1$, $l$, or $l + 1$ summands. The three cases behave quite similarly. If $P^{0}$ has $l + 1$ summands $P_{j_{1}}, \dots, P_{j_{l + 1}}$, then \[j_{1} < i_{1} < j_{2} < i_{2} < \dots < j_{l} < i_{l} < j_{l + 1}.\] Moreover, the component of the map from $P^{-1} \to P^{0}$ from $P_{i_{r}}$ to $P_{j_{s}}$ is zero unless $s \in \{r, r + 1\}$, in which case, the component is the map $P_{i_{r}} \to P_{j_{s}}$ which has as large an image as possible. Such a map is unique up to scalar.
\end{remark}

One can therefore use Proposition~\ref{prop:g_vec} to construct the bijection between the sets $\itwosilt\preproj$ and $\abseq$ directly, without using Proposition~\ref{prop:indec_bij}. We also write $\word{P^{\bullet}}$ for the word corresponding to an indecomposable two-term presilting complex $P^{\bullet}$.

\subsection{Representation theory of $\pibar$}

In this section, we give some results describing the representation theory of $\pibar$ combinatorially, which will then be used to prove the relation between $\preproj$ and triangulations of $\prism$. We first note the following.

\begin{lemma}
Every indecomposable $\pibar$-module is $\tau$-rigid.
\end{lemma}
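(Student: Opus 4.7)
My plan is to exploit the fact that $\pibar$ is a gentle algebra: the relations $\alpha_i\beta_i = \beta_i\alpha_i = 0$ give a gentle presentation of the doubled quiver $Q_n$, in which the only nonzero length-two compositions are $\alpha_{i-1}\alpha_i$ (the $\alpha$-stream continuing rightward) and $\beta_i\beta_{i-1}$ (the $\beta$-stream continuing leftward). Since $Q_n$ is a tree of type~$A$, no bands exist, so $\pibar$ is representation-finite and every indecomposable $\pibar$-module is a string module $M(w)$ indexed by a reduced walk $w$ in the doubled quiver avoiding backtracks and relations.

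For each such $w$ I would read off the minimal projective presentation $P^{-1} \to P^0 \to M(w) \to 0$ directly from the shape of the walk: the summands of $P^0$ correspond to the peaks of $w$ (vertices sitting at the top of a V-shape, supporting the top of $M(w)$), while the summands of $P^{-1}$ correspond to the troughs. Applying the Nakayama functor then identifies $\tau M(w) = M(w')$, where $w'$ is obtained from $w$ by shifting each endpoint one step inward along the gentle pairings, in the familiar ``add cohook / delete hook'' pattern for the AR translate of a string module.

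Finally, I would verify $\Hom_{\pibar}(M(w), \tau M(w)) = 0$ using the standard description of morphisms between string modules as graph maps indexed by common factor substrings satisfying compatibility conditions at both endpoints. The endpoint shift that produces $w'$ from $w$ forces these compatibility conditions to fail simultaneously, so no nonzero morphism exists. The main obstacle is this combinatorial verification together with the explicit identification of $\tau M(w)$ at boundary cases (endpoints of $w$ lying at vertices $1$ or $n$ of $Q_n$, or $w$ corresponding to a projective or injective module); a cleaner alternative would be to apply the Auslander--Reiten formula $\overline{\Hom}(M, \tau M) \cong D\,\mathrm{Ext}^1_{\pibar}(M, M)$ to reduce the claim to $\mathrm{Ext}^1_{\pibar}(M, M) = 0$ for every indecomposable $M$, and to use the known combinatorial formula for extensions between string modules over a gentle algebra, which yields a short case-by-case check.
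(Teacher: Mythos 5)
Your main route is genuinely different from the paper's proof, which disposes of the lemma in two lines by citation: by \cite[Proposition~4.1.2]{bcz} every indecomposable $\pibar$-module is a brick, $\pibar$ is representation-finite, and \cite[Theorem~4.1]{dij} then forces every indecomposable module to be $\tau$-rigid (for a $\tau$-tilting finite algebra the indecomposable $\tau$-rigid modules are in bijection with the bricks, so when every indecomposable is a brick these finite sets must coincide). Your approach instead computes everything by hand from the gentle structure: minimal projective presentations read off from the peaks and troughs of the string, $\tau$ via the Nakayama functor or hooks and cohooks, and vanishing of $\Hom_{\pibar}(M,\tau M)$ via graph maps. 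This is precisely the machinery the paper builds afterwards, in Lemmas~\ref{lem:pibar_tau}, \ref{lem:pibar_sub_fac} and~\ref{lem:pibar_ext}, for the purpose of detecting extensions between \emph{distinct} presilting complexes; specialising Lemma~\ref{lem:pibar_ext} to $N = M$ shows that $\Hom_{\pibar}(M,\tau M)\neq 0$ would require some letter of $\word{M}$ to equal both $a$ and $b$, which is absurd. So your plan does work, at the cost of actually carrying out the combinatorial case analysis you defer (including the boundary cases you flag); the paper's citation argument buys brevity, while yours buys self-containedness and reuses work that is needed later anyway.

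One concrete caveat: the ``cleaner alternative'' via the Auslander--Reiten formula is not sufficient as stated. The formula gives $\mathrm{Ext}^1_{\pibar}(M,M)\cong D\overline{\Hom}_{\pibar}(M,\tau M)$, where the right-hand side is $\Hom$ modulo morphisms factoring through injectives. Since $\pibar$ is not hereditary, $\mathrm{Ext}^1_{\pibar}(M,M)=0$ only shows that $M$ is rigid, which is in general strictly weaker than $\tau$-rigid; you would still have to exclude nonzero maps $M\to\tau M$ that factor through an injective module. Stick with the direct graph-map verification, or with the brick argument the paper uses.
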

\begin{proof}
We have from \cite[Proposition~4.1.2]{bcz} that every indecomposable $\pibar$-module is a brick, meaning that every non-zero endomorphism is an isomorphism. It follows from \cite[Theorem~4.1]{dij} that every indecomposable $\pibar$-module must also be $\tau$-rigid, since $\pibar$ is representation-finite by \cite[Proposition~4.1.]{bcz}.
\end{proof}

The indecomposable $\tau$-rigid pairs over $\pibar$ can be described in a way analogous to those of $\preproj$ \cite{dirrt}.

\begin{lemma}\label{lem:pibar_indec}
There is a bijection between $\itwosilt\pibar$, $\isttiltp\pibar$, and $\abseq$, given in the same way as in Proposition~\ref{prop:indec_bij} and Proposition~\ref{prop:g_vec}.
\end{lemma}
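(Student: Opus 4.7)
The plan is to transport the bijection of Proposition~\ref{prop:indec_bij} along the functor $F = -\otimes_{\preproj}\pibar \colon \kp \to \kpb$. By \cite[Theorem~11]{ejr}, $F$ induces a bijection $\twosilt\preproj \to \twosilt\pibar$; since $F(P_{i}) = \overline{P}_{i}$, the functor preserves $g$-vectors. Both $\preproj$ and $\pibar$ have $n$ simple modules, so every two-term silting complex on either side has exactly $n$ indecomposable summands.

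First I would show that $F$ restricts to a bijection $\itwosilt\preproj \to \itwosilt\pibar$. Given a basic two-term silting complex $T = T_{1} \oplus \cdots \oplus T_{n}$ over $\preproj$ with each $T_{i}$ indecomposable, the image $\overline{T} = \overline{T}_{1} \oplus \cdots \oplus \overline{T}_{n}$ is a basic two-term silting complex over $\pibar$ by \cite{ejr}, and must therefore have exactly $n$ indecomposable summands, forcing each $\overline{T}_{i}$ to be indecomposable. Conversely, every indecomposable two-term presilting complex is a direct summand of a two-term silting complex by \cite[Theorem~3.2 and Proposition~2.9]{air}, so every element of $\itwosilt\pibar$ arises in this way. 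That distinct summands of $T$ give distinct images follows from $F$ preserving $g$-vectors together with the fact that two-term presilting complexes are determined by their $g$-vectors \cite[Theorem~5.5]{air}.

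Composing with Proposition~\ref{prop:indec_bij} yields the bijection $\abseq \to \itwosilt\pibar$, and because $F$ preserves $g$-vectors the formula of Proposition~\ref{prop:g_vec} transfers verbatim. Finally, the bijection with $\isttiltp\pibar$ follows from \cite[Theorem~3.2]{air}, which gives a bijection between indecomposable two-term presilting complexes and indecomposable $\tau$-rigid pairs over any finite-dimensional algebra.

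The main obstacle is verifying that $F$ genuinely sends indecomposable summands to indecomposable summands rather than splitting them further; this is the one step that is not purely formal, and it is resolved by the counting argument above together with the $g$-vector rigidity of silting complexes. Everything else is a direct consequence of \cite{ejr}, \cite{air}, and Proposition~\ref{prop:indec_bij}.
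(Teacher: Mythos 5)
Your proposal is correct and follows essentially the same route as the paper: both transfer the classification from $\preproj$ to $\pibar$ via \cite[Theorem~11]{ejr} (so that $g$-vectors of indecomposable two-term presilting complexes agree on both sides) and then invoke Proposition~\ref{prop:indec_bij}, Proposition~\ref{prop:g_vec}, and \cite{air}. The only difference is that you spell out the counting argument showing the bijection on silting complexes descends to indecomposable summands, a step the paper leaves implicit.
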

\begin{proof}
This bijection follows from Proposition~\ref{prop:g_vec} and \cite[Theorem~11]{ejr}, which says that the $g$-vectors of indecomposable presilting complexes for $\pibar$ must be the same as those for $\preproj$. The bijection between $\abseq$ and $\isttiltp\pibar$ then follows from \cite{air}, and it is clear that it can be constructed in a similar way to Proposition~\ref{prop:indec_bij}.
\end{proof}

For indecomposable $\tau$-rigid $\pibar$-modules $M$ and indecomposable presilting complexes $\overline{P}^{\bullet}$ over $\pibar$, we write $\word{M}$ and $\word{\overline{P}^{\bullet}}$ for the corresponding words in $\abseq$, as we do with $\preproj$.

\begin{example}\label{ex:pibar_mod}
Consider the $\argpb{6}$-module $M$ given by \[\tcs{\phantom{1}2\phantom{3}\phantom{4}\phantom{5}\phantom{6}\\1\phantom{2}3\phantom{4}\phantom{5}\phantom{6}\\\phantom{1}\phantom{2}\phantom{3}4\phantom{5}6\\\phantom{1}\phantom{2}\phantom{3}\phantom{4}5\phantom{6}}\, .\] Then $\word{M} = aabbbab$, using Lemma~\ref{lem:pibar_indec} and Proposition~\ref{prop:indec_bij}. Furthermore, using Lemma~\ref{lem:pibar_indec} and Proposition~\ref{prop:g_vec}, the projective presentation of $M$ is given by $\overline{P}_{5} \to \overline{P}_{2} \oplus \overline{P}_{6}$.

Similarly, the $\argpb{6}$-module $N$ given by \[\tcs{2\phantom{3}4\\\phantom{2}3\phantom{4}}\] has word $\word{N} = bababaa$ and projective presentation $\overline{P}_{1} \oplus \overline{P}_{3} \oplus \overline{P}_{5} \to \overline{P}_{2} \oplus \overline{P}_{4}$.
\end{example}

The following criterion for identifying the support of an indecomposable $\pibar$-module from its word is very useful.

\begin{lemma}\label{lem:pibar_comp_factors}
An indecomposable $\pibar$-module $M$ with $\word{M} = x_{0}x_{1} \dots x_{n}$ is supported at vertex $j$ if and only if there exist $i$ and $k$ such that $i \leqslant j < k$ such that $x_{i} = a$ and $x_{k} = b$.
\end{lemma}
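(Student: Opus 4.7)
The plan is to read off the dimension of $M$ at each vertex $j$ directly from the minimal projective presentation supplied by Lemma~\ref{lem:pibar_indec} and Proposition~\ref{prop:g_vec}. A useful preliminary is that every indecomposable projective over $\pibar$ has the all-ones dimension vector: the relations $\alpha_{l}\beta_{l} = \beta_{l}\alpha_{l} = 0$ kill every backtracking path in $\pibar$, so $\overline{P}_{l} = e_{l}\pibar$ has basis
\[
\{\beta_{l - 1}\cdots\beta_{j} : 1 \leqslant j < l\} \cup \{e_{l}\} \cup \{\alpha_{l}\cdots\alpha_{j - 1} : l < j \leqslant n\},
\]
giving a unique basis vector $p_{l}^{j}$ at each vertex $j \in \{1, \dots, n\}$.

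Combining Lemma~\ref{lem:pibar_indec} with Proposition~\ref{prop:g_vec}, the minimal projective presentation of $M$ reads $\overline{P}^{-1} \xrightarrow{d} \overline{P}^{0} \to M \to 0$, with $\overline{P}^{0} = \bigoplus_{l \in L_{+}}\overline{P}_{l}$ and $\overline{P}^{-1} = \bigoplus_{l \in L_{-}}\overline{P}_{l}$, where $L_{+} = \{l : x_{l - 1}x_{l} = ab\}$ and $L_{-} = \{l : x_{l - 1}x_{l} = ba\}$ record the type-change positions in the word. By Remark~\ref{rmk:psilt_map} the sets $L_{+}$ and $L_{-}$ interleave, and the only non-zero components of $d$ go between adjacent pairs: for adjacent $l \in L_{+}$ and $l' \in L_{-}$, the component $\overline{P}_{l'} \to \overline{P}_{l}$ is left multiplication by the unique path from $l$ to $l'$, which is $\alpha_{l}\cdots\alpha_{l' - 1}$ if $l < l'$ and $\beta_{l - 1}\cdots\beta_{l'}$ if $l > l'$.

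Restricting to vertex $j$, the identities $\alpha_{l}\beta_{l} = \beta_{l}\alpha_{l} = 0$ give an explicit description: an $\alpha$-component sends $p_{l'}^{j}$ to $p_{l}^{j}$ when $j \geqslant l'$ and to $0$ otherwise, while a $\beta$-component sends $p_{l'}^{j}$ to $p_{l}^{j}$ when $j \leqslant l'$ and to $0$ otherwise. A case analysis according to the first and last letters of the word then computes the cokernel of $d_{j}$: it leaves a single surviving basis vector exactly when the criterion of the lemma holds at $j$, and leaves nothing otherwise.

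The main obstacle is this case analysis. One has to track $j$'s position within the interleaved sequence $L_{+} \cup L_{-}$, and handle separately the first and last elements of the sequence, where only one of the two adjacent components of $d$ is present. These boundary effects are precisely what produce the inequalities in the criterion, since they correspond to the runs of equal letters at the start and end of the word that determine the extremal positions where $M$ fails to be supported.
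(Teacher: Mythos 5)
Your setup is sound, and your route is genuinely different from the paper's. The paper's proof is essentially immediate from the construction in Proposition~\ref{prop:indec_bij}: $\word{M}$ \emph{is} the upper contour of the Young diagram defining $M$, so the composition factors form an interval running from the cell just after the first occurrence of $a$ to the cell at the last occurrence of $b$, and the criterion is read off directly. You instead recompute $M$ as the cokernel of the explicit presentation $d\colon\bigoplus_{l\in L_{-}}\overline{P}_{l}\to\bigoplus_{l\in L_{+}}\overline{P}_{l}$. Your preliminary facts are all correct: each $\overline{P}_{l}$ has a one-dimensional piece $p_{l}^{j}$ at every vertex $j$, the summands are indexed by the $ab$/$ba$ positions in the word, and the components of $d$ restricted to vertex $j$ vanish exactly as you describe. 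This is a viable (if much longer) path to the lemma.

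The problem is that the proof stops exactly where the content of the lemma lives. The claim that the case analysis ``leaves a single surviving basis vector exactly when the criterion of the lemma holds at $j$'' is the entire statement to be proved, and you defer it, explicitly calling it ``the main obstacle.'' Asserting the answer of the rank computation without performing it is assuming the conclusion, and this particular step cannot be waved through: carried out carefully, your computation gives that $M$ is supported at $j$ if and only if some $a$ occurs at a position $i<j$ and some $b$ occurs at a position $k\geqslant j$, i.e.\ $i<j\leqslant k$. That is what the paper's own argument establishes (support equal to $[i_{0}+1,k_{0}]$ with $i_{0}$ the first $a$ and $k_{0}$ the last $b$) and what Example~\ref{ex:pibar_mod} confirms: $\word{N}=bababaa$ has support $\{2,3,4\}$, whereas the inequalities $i\leqslant j<k$ as displayed would give $\{1,2,3\}$. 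So the deferred case analysis is precisely where the delicate indexing must be pinned down, and your blanket assertion that it reproduces the criterion as written is unjustified (and, taken literally, not what the computation yields). To complete the argument you must fix $j$, write out the $|L_{+}|\times|L_{-}|$ matrix of $d_{j}$ using your vanishing rule for the $\alpha$- and $\beta$-components, and verify that its corank is $1$ precisely when an $a$ precedes position $j$ and a $b$ occurs at position at least $j$, and is $0$ otherwise.
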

\begin{proof}
It suffices to note that the composition factors of $M$ always form an interval $[i + 1, k]$ in $[n]$ and that the first of these composition factors $i + 1$ will occur when $x_{i}$ is the first occurrence of `$a$' and the last composition factor $k$ will occur when $x_{k}$ is the last occurrence of `$b$'. Before the first occurrence of `$a$' and after the first occurrence of `$b$', the contour of the module $M$ will go along the boundary of the Young diagram, and so $M$ will not be supported at the corresponding simple module. Between these occurrence, the contour of $M$ never reaches the boundary of the Young diagram again, so $M$ is supported at all of the simples.
\end{proof}

We can describe the Auslander--Reiten translate over $\pibar$. This can also be seen from the interpretation of the Auslander--Reiten translate for string algebras in terms of adding and removing hooks and cohooks from \cite{br_ar}.

\begin{lemma}\label{lem:pibar_tau}
Let $M$ be an indecomposable non-projective $\pibar$-module with \[\word{M} = a^{r} \, b \, w \, a \, b^{s},\] where $w$ is some arbitrary, possibly empty, subword, and possibly $r = 0$ and possibly $s = 0$. Then \[\word{\tau M} = b^{r} \, a \, w \, b \, a^{s}.\]
\end{lemma}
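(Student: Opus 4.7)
The plan is to identify $\tau M$ by applying the Auslander--Bridger transpose formula $\tau M = D\operatorname{Tr} M$ to a minimal projective presentation of $M$, and then reading off the word of the resulting $\pibar$-module using Lemma~\ref{lem:pibar_indec}.

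First, I would write down the minimal projective presentation $P^{-1}\to P^{0}\to M\to 0$ of $M$ starting from $\word{M}=a^{r}b w a b^{s}$ using Lemma~\ref{lem:pibar_indec} and Proposition~\ref{prop:g_vec}: each $ab$-pattern of the word gives a summand of $P^{0}$ and each $ba$-pattern gives a summand of $P^{-1}$. The two outer transitions $a\cdot b$ at positions $r$ and $n-s+1$ contribute the summands $\overline{P}_{r}$ and $\overline{P}_{n-s+1}$ of $P^{0}$, while the inner boundary transitions $b\cdot y_{1}$ and $y_{k}\cdot a$ (writing $w = y_{1}\cdots y_{k}$) contribute the corresponding boundary summands of $P^{-1}$. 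The explicit form of the differential is recorded in Remark~\ref{rmk:psilt_map}.

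Applying the Nakayama functor $\nu$ yields $\tau M$ as the kernel of the induced map $\nu P^{-1}\to \nu P^{0}$. Since $\pibar$ is a string algebra of type~$A$, the injectives $\overline{I}_{i}=\nu\overline{P}_{i}$ admit an explicit description as string modules, so both the induced map and its kernel can be computed combinatorially. One then checks that the resulting indecomposable module is precisely the one associated by Lemma~\ref{lem:pibar_indec} to the word $b^{r}a w b a^{s}$. Alternatively, since indecomposable $\tau$-rigid pairs are determined by their $g$-vectors (\cite[Theorem~5.5]{air}), one may instead extract the minimal projective presentation of $\tau M$ from its kernel description and compare its $g$-vector against the one prescribed by Proposition~\ref{prop:g_vec} for $b^{r}a w b a^{s}$.

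The main obstacle is the combinatorial identification of the kernel, which requires a careful analysis of the induced map between the injectives $\overline{I}_{i}$ together with a case analysis of the degenerate situations $r=0$, $s=0$, or $w=\emptyset$, where several of the boundary transitions coincide and the shapes of $P^{0}$ and $P^{-1}$ collapse accordingly. A more conceptual alternative, as noted in the statement, is to invoke the Butler--Ringel description of $\tau$ for string algebras~\cite{br_ar}: the word $\word{M}$ encodes the string of $M$, and $\tau$ acts by adding a hook at one end and a cohook at the other---operations which implement precisely the replacements $a^{r}b\mapsto b^{r}a$ on the left and $ab^{s}\mapsto ba^{s}$ on the right, leaving the interior subword $w$ untouched.
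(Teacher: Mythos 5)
Your strategy is the same as the paper's: take the minimal projective presentation $P^{-1}\to P^{0}$ of $M$ read off from $\word{M}$, apply the Nakayama functor, and identify $\tau M$ from the resulting exact sequence $0\to\tau M\to\nu P^{-1}\to\nu P^{0}$. The one point worth flagging is that the step you single out as the main obstacle --- the explicit combinatorial computation of the kernel of $\nu P^{-1}\to\nu P^{0}$ --- is exactly what the paper's proof avoids: since $\nu$ sends the projective at a vertex to the injective at the same vertex, $\tau M$ has an injective copresentation by the injectives at the same vertices, and the dual of Lemma~\ref{lem:pibar_indec} says that an injective copresentation determines the \emph{lower} contour of an indecomposable $\pibar$-module in the same way a projective presentation determines its upper contour. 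Hence the lower contour of $\tau M$ is $\word{M}=a^{r}bwab^{s}$, and translating a lower contour into the corresponding upper contour (the content of Figure~\ref{fig:pibar_tau}) yields $\word{\tau M}=b^{r}awba^{s}$ with no case analysis of the differential or its kernel. Your fallback via the Butler--Ringel hook/cohook description of $\tau$ for string algebras is also legitimate and is explicitly acknowledged by the paper as an alternative route; your $g$-vector variant, by contrast, is somewhat circular as stated, since extracting the minimal projective presentation of $\tau M$ ``from its kernel description'' presupposes the very identification you are trying to establish.
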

\begin{proof}
Let $P^{-1} \to P^{0} \to M \to 0$ be the projective presentation of $M$. Then there is an exact sequence $0 \to \tau M \to \nu P^{-1} \to \nu P^{0}$. Note that $\nu$ sends a projective at a given vertex to the injective at the same vertex. Hence, $\tau M$ has an injective presentation by the injectives at the same vertices as the projectives in the projective presentation of $M$. Using the dual of Lemma~\ref{lem:pibar_indec}, we conclude that the lower contour of $\tau M$ is given by $\word{M}$. Hence, $\word{\tau M}$ is obtained from $\word{\tau M}$ as shown in Figure~\ref{fig:pibar_tau}, which establishes the result. (The cases where only one of $r$ and $s$ is non-zero are easily extrapolated from those shown in the figure.)
\end{proof}

\begin{example}\label{ex:ex_pibar_tau}
We give an example where we compute the Auslander--Reiten translate of an indecomposable $\pibar$-module. We consider the $\argpb{5}$-module $M$ given by \[\tcs{\phantom{1}2\phantom{3}4\\1\phantom{2}3\phantom{4}}.\] Then $\word{M} = aababa$, so $\word{\tau M} = bbaabb$. Hence $\tau M$ is the module \[\tcs{\phantom{3}4\phantom{5}\\3\phantom{4}5}.\] This can be visualised in Figure~\ref{fig:ex_pibar_tau}.
\end{example}

\begin{figure}
\caption{Illustration of Lemma~\ref{lem:pibar_tau}}\label{fig:pibar_tau}
\[
\begin{tikzpicture}[scale=0.4]

\begin{scope}[shift={(-8,0)}]

% Lower w

\draw (-4,0) -- (-2,-2) -- (0,0) -- (2,-2) -- (4,0);

% Upper w

\draw (-4,2) -- (-2,0) -- (0,2) -- (2,0) -- (4,2);

% `b's

\draw[red] (-4,0) -- (-5,1);
\draw[red] (4,0) -- (5,1);

% `a's

\draw[red] (-4,2) -- (-5,1);
\draw[red] (4,2) -- (5,1);

% Left bits

\draw[blue] (-5,1) -- (-7,-1);
\draw[blue] (-5,1) -- (-7,3);

% Right bits

\draw[blue] (5,1) -- (7,-1);
\draw[blue] (5,1) -- (7,3);

% Word labels

\node at (-4.7,1.8) {\color{red} $a$};
\node at (4.7,1.8) {\color{red} $a$};

\node at (-4.7,0.25) {\color{red} $b$};
\node at (4.7,0.25) {\color{red} $b$};

\node at (0,2.5) {$w$};
\node at (0,-1) {$w$};

\node at (-5.8,-0.5) {\color{blue} $a^{r}$};
\node at (-5.8,2.5) {\color{blue} $b^{r}$};

\node at (5.8,-0.5) {\color{blue} $b^{s}$};
\node at (5.8,2.5) {\color{blue} $a^{s}$};

% Label of figure

\node at (0,-4) {$r \neq 0$ and $s \neq 0$ case};

\end{scope}

\begin{scope}[shift={(8,0)}]

% Lower w

\draw (-6,0) -- (-4,-2) -- (-2,0) -- (0,-2) -- (2,0) -- (4,-2) -- (6,0);

% Upper w

\draw (-6,2) -- (-4,0) -- (-2,2) -- (0,0) -- (2,2) -- (4,0) -- (6,2);

% `b's

\draw[red] (-6,0) -- (-7,1);
\draw[red] (6,0) -- (7,1);

% `a's

\draw[red] (-6,2) -- (-7,1);
\draw[red] (6,2) -- (7,1);

% Word labels

\node at (-6.7,1.8) {\color{red} $a$};
\node at (6.7,1.8) {\color{red} $a$};

\node at (-6.7,0.25) {\color{red} $b$};
\node at (6.7,0.25) {\color{red} $b$};

\node at (0,0.7) {$w$};
\node at (0,-2.5) {$w$};

% Label of figure

\node at (0,-4) {$r = 0$ and $s = 0$ case};

\end{scope}

\end{tikzpicture}
\]
\end{figure}
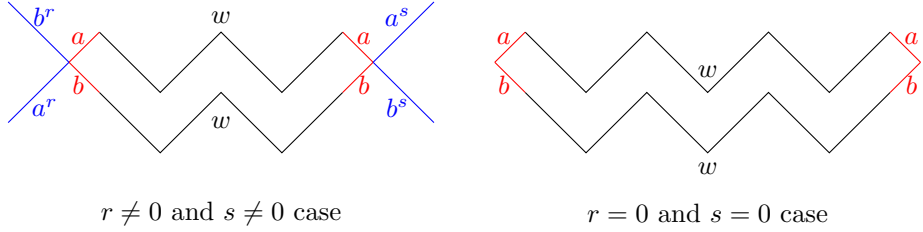

\begin{figure}
\caption{Illustration of Example~\ref{ex:ex_pibar_tau}}\label{fig:ex_pibar_tau}
\[
\begin{tikzpicture}[scale=0.75]

\draw[red] (0,0) -- (2,2) -- (3,1) -- (4,2) -- (5,1) -- (6,2);

\node at (1,0) {\color{red} 1};
\node at (2,1) {\color{red} 2};
\node at (3,0) {\color{red} 3};
\node at (4,1) {\color{red} 4};

\draw[blue] (0,4) -- (2,2) -- (4,4) -- (6,2);

\node at (3,2) {\color{blue} 3};
\node at (4,3) {\color{blue} 4};
\node at (5,2) {\color{blue} 5};

\end{tikzpicture}
\]
\end{figure}
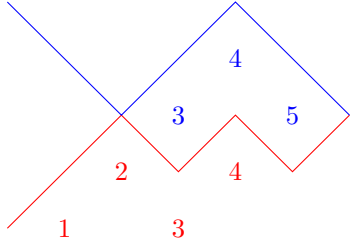

The following description of submodules and factor modules of indecomposable $\pibar$-modules is key. Here, \[X = u\overset{i}{b}v\] means that $b = x_{i}$, where $X = x_{0} \dots x_{n}$. We use $\argpb{[i + 1, j]}$ to denote the algebra $\argpb{j - i}$ with the vertex set given by $[i + 1, j] := \{i + 1, i + 2, \dots, j\}$, in the natural way.

\begin{lemma}\label{lem:pibar_sub_fac}
Let $M$ be an indecomposable $\pibar$-module. Then $M$ has a submodule supported on the vertices $\{i + 1, \dots, j\}$ if and only if one of the following four cases hold.
\begin{enumerate}[label={\upshape(\arabic*)}]
\item $\word{M} = b^{s}\,a\,u\,\overset{i}{b}\,w\,\overset{j}{a}\,v\,b\,a^{r}$, where possibly $r = 0$ or $s = 0$.
\item $\word{M} = b^{i}\,\overset{i}{a}\,w\,\overset{j}{a}\,v\,b\,a^{r}$, where possibly $r = 0$ or $i = 0$.
\item $\word{M} = b^{s}\,a\,u\,\overset{i}{b}\,w\,\overset{j}{b}\,a^{n - j}$, where possibly $s = 0$ or $j = n$.
\item $\word{M} = b^{i}\,\overset{i}{a}\,w\,\overset{j}{b}\,a^{n - j}$, where possibly $i = 0$ or $j = n$.
\end{enumerate}
In each case, if $L$ is the $\argpb{[i + 1, j]}$-module corresponding to the $\pibar$-submodule of $M$, then $\word{L} = awb$. Note that $w$ is possibly empty, as are $u$ and $v$.
%we also possibly have $r = 0$ or $s = 0$.

Dually, $M$ has a factor module supported on the vertices $\{i + 1, \dots, j\}$ if and only if one of the following four cases hold.
\begin{enumerate}[label={\upshape(\arabic*)}]
\item $\word{M} = b^{s}\,a\,u\,\overset{i}{a}\,w\,\overset{j}{b}\,v\,b\,a^{r}$, where possibly $r = 0$ or $s = 0$.\label{op:1}
\item $\word{M} = b^{i}\,\overset{i}{a}\,w\,\overset{j}{b}\,v\,b\,a^{r}$, where possibly $r = 0$ or $i = 0$.\label{op:2}
\item $\word{M} = b^{s}\,a\,u\,\overset{i}{a}\,w\,\overset{j}{b}\,a^{n - j}$, where possibly $s = 0$ or $j = n$.\label{op:3}
\item $\word{M} = b^{i}\,\overset{i}{a}\,w\,\overset{j}{b}\,a^{n - j}$, where possibly $i = 0$ or $j = n$.\label{op:4}
\end{enumerate}
In each case, if $N$ is the $\argpb{[i + 1, j]}$-module corresponding to the $\pibar$-submodule of $M$, then $\word{N} = awb$. Again, $w$ is possibly empty, as are $u$ and $v$; and possibly $r = 0$ or $s = 0$.
\end{lemma}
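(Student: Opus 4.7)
The plan is to use the Young-diagram (contour-word) model from Proposition~\ref{prop:indec_bij} and Lemma~\ref{lem:pibar_indec}: each indecomposable $\pibar$-module $M$ is pictured as a Young diagram whose upper contour, read left-to-right, is $\word{M}$, with $a$ denoting an up-step and $b$ denoting a down-step. I would prove the factor-module statement in full; the submodule statement then follows by an entirely analogous argument working with the lower contour of the Young diagram, where the roles of $a$ and $b$ at the cut positions are interchanged (explaining why the sub- and factor-case words differ only at the $i$ and $j$ entries).

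For factor modules the strategy is as follows. Suppose $N$ is an indecomposable factor module of $M$ supported on exactly $\{i+1, \dots, j\}$. Geometrically, $N$ is obtained from $M$ by restricting $M$'s Young diagram to the strip above the interval $[i+1, j]$. The restricted shape must itself be a genuine Young diagram corresponding to some indecomposable $\argpb{[i+1, j]}$-module, and by Lemma~\ref{lem:pibar_comp_factors} applied inside $\argpb{[i+1, j]}$ this forces $\word{N} = awb$ for some possibly empty subword $w$; in particular $x_i = a$ and $x_j = b$. Conversely, any such truncation of $M$'s diagram yields a factor module, since $\pibar$ is special biserial and factor modules of indecomposable string modules are obtained by precisely this kind of cut.

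The four cases then enumerate the possibilities at each endpoint. At position $i$, either $i$ lies strictly interior to the support of $M$---which, by Lemma~\ref{lem:pibar_comp_factors}, requires some $a$ strictly before $i$, so $\word{M}$ has the form $b^s\,a\,u \cdot x_i \dots$---or $i$ sits at the left edge of the support of $M$, which forces $\word{M}$ to begin with $b^i$ immediately followed by $x_i = a$. Symmetrically at position $j$, either $j$ is strictly interior (giving tail $\dots x_j\,v\,b\,a^r$) or $j$ is at the right edge of the support (giving tail $x_j\,a^{n - j}$). The four resulting combinations give cases (1)--(4).

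The main obstacle I anticipate is the bookkeeping at the boundary: one must carefully track which of $r, s, i, n - j$ are allowed to vanish in each case, and confirm that in each degenerate configuration the support of $N$ is still exactly $[i+1, j]$ and not smaller. This is a finite case analysis using Lemma~\ref{lem:pibar_comp_factors} to certify the endpoint constraints; no deeper structural input beyond the Young-diagram / string-module picture of $\pibar$ is required.
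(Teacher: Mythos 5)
Your overall strategy---the contour-word model, the reduction to a case analysis on whether each endpoint of $[i+1,j]$ is interior to the support of $M$ or sits at its edge, and the use of Lemma~\ref{lem:pibar_comp_factors} to pin down the prefix ($b^{s}\,a\,u$ versus $b^{i}$) and the suffix ($v\,b\,a^{r}$ versus $a^{n-j}$)---is essentially the paper's, which treats the submodule case in detail and dualises. However, there is a genuine gap at the central step, where you derive the endpoint letters $x_{i} = a$ and $x_{j} = b$ of $\word{M}$ from the sincerity of the restricted module. Lemma~\ref{lem:pibar_comp_factors}, applied over $\argpb{[i+1,j]}$, tells you that $\word{N}$, the word of $N$ \emph{as an abstract $\argpb{[i+1,j]}$-module}, begins with $a$ and ends with $b$; it does not tell you that $\word{N}$ equals the substring $x_{i}\cdots x_{j}$ of $\word{M}$, and in general it does not. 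For instance, take $n = 2$ and $M = \overline{P}_{1}$ (top $S_{1}$, socle $S_{2}$), so $\word{M} = abb$, and consider the strip $\{2\}$, i.e.\ $i = 1$, $j = 2$: the vector-space restriction is $S_{2}$, which is sincere over $\argpb{[2,2]}$ with word $ab$, yet $x_{1}x_{2} = bb$---and indeed $S_{2}$ is a submodule of $M$, not a factor module. Sincerity of the restriction therefore carries no information about $x_{i}$ and $x_{j}$ and cannot distinguish submodules from factor modules, which is precisely the distinction the lemma is making.

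The conditions $x_{i} = a$, $x_{j} = b$ in the factor case (and $x_{i} = b$, $x_{j} = a$ in the submodule case) must instead come from the module structure: the kernel of $M \twoheadrightarrow N$ is the span of the basis vectors at the vertices outside $[i+1,j]$, and for this subspace to be closed under the action of $\pibar$ the arrow joining $i$ and $i+1$ must point from $i+1$ to $i$ (forcing $x_{i} = a$) and the arrow joining $j$ and $j+1$ must point from $j$ to $j+1$ (forcing $x_{j} = b$), whenever $M$ is supported at $i$, respectively at $j+1$. This is exactly the ``closed under the action'' argument the paper uses, and it is also the content of the factor-string/sub-string conditions for string algebras that you invoke in your converse direction; once it is inserted, the identification $\word{N} = x_{i}\cdots x_{j} = awb$ becomes valid and the rest of your four-case bookkeeping goes through. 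A smaller point: the submodule half follows from the factor half by reversing the closure condition at the two cut positions (equivalently, by duality of modules), not by passing to the lower contour---both halves of the statement read off the same word $\word{M}$.
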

\begin{proof}
We focus on the submodule case. The four cases correspond as follows.
\begin{enumerate}
\item $i \neq 0$ and $j \neq n$ and $M$ is supported on $i$ and $j + 1$.
\item $i = 0$ or $M$ is not supported on $i$, but $j \neq n$ and $M$ is supported on $j + 1$.
\item $i \neq 0$ and $M$ is supported at $i$, but $j = n$ or $M$ is not supported on $j + 1$.
\item $i = 0$ or $M$ is not supported at $i$, and $j = n$ or $M$ is not supported at $j + 1$.
\end{enumerate}
These cases are clearly exhaustive. We deal with them one by one. We let $\word{M} = x_{0}x_{1} \dots x_{n}$.

Case~\ref{op:1}: It follows from Lemma~\ref{lem:pibar_comp_factors} that if $M$ is supported on $i$ and $j + 1$, then `$a$' must occur before $i$ and `$b$' must occur after $j$. Since the vector subspace $L$ of $M$ supported on $\{i + 1, \dots, j\}$ is a submodule, then we must have $x_{i} = b$ and $x_{j} = a$ if $L$ is to be closed under the action of $\pibar$.

Case~\ref{op:2}: As in case~\ref{op:1}, we conclude that `$b$' occurs after $j$, and that $x_{j} = a$. If $i = 0$, then $s = 0$; moreover, $x_{0} = x_{i} = a$ by Lemma~\ref{lem:pibar_comp_factors}, since $M$ is supported on $i + 1$ by assumption. If $i \neq 0$, then, since $M$ is not supported on $i$, by Lemma~\ref{lem:pibar_comp_factors}, we must have that $x_{0} \dots x_{i - 1} = b^{i}$.

Cases~\ref{op:3} and~\ref{op:4} can be shown in a similar way to case~\ref{op:2}. In all cases, we have that $\word{L} = awb$ by Lemma~\ref{lem:pibar_comp_factors}, since $L$ is sincere by assumption, and must have the shape prescribed by the word $w$.

The dual cases concerning a factor module of $M$ follow similarly. Case~\ref{op:1} is straightforwardly dual. In case~\ref{op:2}, one can reason that $x_{0} \dots x_{i} = b^{i}a$ in the same way to in the submodule case. Showing that $x_{j} \dots x_{n} = vba^{s}$ is then done in the same way as in the factor module case~\ref{op:1}. The remaining cases can be done in an analogous way.
\end{proof}

We can now prove a criterion for non-vanishing of $\Hom_{\pibar}(N, \tau M) \neq 0$ for indecomposable $\pibar$-modules $M$ and $N$.

\begin{lemma}\label{lem:pibar_ext}
Let $M$ and $N$ be indecomposable $\pibar$-modules with $\word{M} = x_{0}x_{1} \dots x_{n}$ and $\word{N} = y_{0}y_{1} \dots y_{n}$. Then $\Hom_{\pibar}(N, \tau M) \neq 0$ if and only if there exist $i$ and $j$ such that $x_{i} = b$, $x_{j} = a$, $y_{i} = a$, and $y_{j} = b$.
\end{lemma}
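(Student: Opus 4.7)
The plan is to reduce the question to finding common subfactor modules and then to translate the resulting condition combinatorially using Lemmas~\ref{lem:pibar_sub_fac} and~\ref{lem:pibar_tau}. First I would observe that $\Hom_{\pibar}(N, \tau M) \neq 0$ if and only if there exists an indecomposable $\pibar$-module $L$ that is simultaneously a factor module of $N$ and a submodule of $\tau M$: a nonzero $f \colon N \to \tau M$ factors through its image, and any indecomposable direct summand of the image is such an $L$, while conversely any common subfactor yields a nonzero composition $N \twoheadrightarrow L \hookrightarrow \tau M$.

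Next I would apply Lemma~\ref{lem:pibar_sub_fac} to both $N$ and $\tau M$. If $L$ is supported on $\{i+1, \dots, j\}$ with $\word{L} = a w' b$, then $L$ being a factor of $N$ forces $y_i = a$, $y_j = b$ and $w' = y_{i+1} \dots y_{j-1}$, while $L$ being a submodule of $\tau M$ forces $(\tau M)_i = b$, $(\tau M)_j = a$ and $w' = (\tau M)_{i+1} \dots (\tau M)_{j-1}$. Writing $\word{M} = a^{r_0} b w_0 a b^{s_0}$ and invoking Lemma~\ref{lem:pibar_tau}, we have $\word{\tau M} = b^{r_0} a w_0 b a^{s_0}$; the two words agree on the middle region $[r_0 + 1, n - s_0 - 1]$ and are swapped on the complementary boundary positions. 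For the forward direction, when $i, j$ both lie in the middle region we have $x_i = (\tau M)_i = b$ and $x_j = (\tau M)_j = a$, so $(i^*, j^*) = (i, j)$ works directly; if $i$ or $j$ lies in the boundary, the middle-matching equality forces $\word{N}$ to have specific entries in the boundary region of $\word{M}$, from which one identifies witnesses such as $i^* = r_0$ or $j^* = n - s_0$. For the backward direction, given $i^* \neq j^*$ with the stated properties, one may assume $i^* < j^*$ and construct $L$ by taking a suitable interval containing or extending $[i^*, j^*]$ at whose endpoints the conditions of Lemma~\ref{lem:pibar_sub_fac} are satisfied.

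The main obstacle will be the case analysis arising from the four submodule cases and four factor-module cases of Lemma~\ref{lem:pibar_sub_fac}, compounded by the middle-versus-boundary dichotomy for $\word{\tau M}$ coming from Lemma~\ref{lem:pibar_tau}. Particular care is required in the backward direction when choosing the interval endpoints so that the middles of $\word{N}$ and $\word{\tau M}$ literally coincide on the chosen interval, and the extremal cases $r_0 = 0$, $s_0 = 0$, $i = 0$, $j = n$ must each be checked to fit within one of the admissible configurations of the lemma.
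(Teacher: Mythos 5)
Your proposal follows essentially the same route as the paper: reduce $\Hom_{\pibar}(N,\tau M)\neq 0$ to the existence of a common factor-module-of-$N$/submodule-of-$\tau M$, characterise both sides via Lemma~\ref{lem:pibar_sub_fac}, and transport the conditions on $\word{\tau M}$ back to $\word{M}$ via Lemma~\ref{lem:pibar_tau}, with the same case analysis you anticipate. The one boundary case you do not flag is $M$ projective, where $\tau M=0$ and the normal form $a^{r_0}\,b\,w_{0}\,a\,b^{s_0}$ does not exist, so one must check directly that $\word{M}=a^{l}b^{n+1-l}$ admits no witness pair; note also that the witnesses are meant to satisfy $i<j$ (as made explicit in Proposition~\ref{prop:ext}), so your ``one may assume $i^{*}<j^{*}$'' should be read as part of the hypothesis rather than as a reduction, since the condition is not symmetric in $i$ and $j$.
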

\begin{proof}
First note that if $M$ is projective, then $\tau M = 0$, so $\Hom_{\pibar}(N, \tau M) = 0$. In this case, $\word{M} = a^{l}b^{n - l + 1}$ for some $l \in \{1, 2, \dots, n\}$, so it is clear that there can be no $i < j$ with $x_{i} = b$ and $x_{j} = a$.

Excluding the case where $M$ is projective, we have that $\Hom_{\pibar}(N, \tau M) \neq 0$ if and only if there is a non-zero factor module of $N$ which is a submodule of $\tau M$, namely the image of the non-zero map. Hence, this is the case if and only if $\word{\tau M}$ falls into one of the submodule cases of Lemma~\ref{lem:pibar_sub_fac} and $\word{N}$ falls into one of the factor module cases. Hence, there exist $i$ and $j$ such that $y_{i} = a$ and $y_{j} = b$, whilst $\word{\tau M}$ falls into one of the following cases.
\begin{enumerate}
\item $\word{\tau M} = b^{s}\,a\,u\,\overset{i}{b}\,w\,\overset{j}{a}\,v\,b\,a^{r}$, where possibly $s = 0$ or $r = 0$.
\item $\word{\tau M} = b^{i}\,\overset{i}{a}\,w\,\overset{j}{a}\,v\,b\,a^{r}$, where possibly $i = 0$ or $r = 0$.
\item $\word{\tau M} = b^{s}\,a\,u\,\overset{i}{b}\,w\,\overset{j}{b}\,a^{n - j}$, where possibly $s = 0$ or $j = n$.
\item $\word{\tau M} = b^{i}\,\overset{i}{a}\,w\,\overset{j}{b}\,a^{n - j}$, where possibly $i = 0$ or $j = n$.
\end{enumerate}
Applying Lemma~\ref{lem:pibar_tau}, we have these four cases holds if and only if the corresponding one of the following four options hold for $\word{M}$.
\begin{enumerate}
\item $\word{M} = a^{s}\,b\,u\,\overset{i}{b}\,w\,\overset{j}{a}\,v\,a\,b^{r}$, where possibly $s = 0$ or $r = 0$.
\item $\word{M} = a^{i}\,\overset{i}{b}\,w\,\overset{j}{a}\,v\,a\,b^{r}$, where possibly $i = 0$ or $r = 0$.
\item $\word{M} = a^{s}\,b\,u\,\overset{i}{b}\,w\,\overset{j}{a}\,b^{n - j}$, where possibly $s = 0$ or $j = n$.
\item $\word{M} = a^{i}\,\overset{i}{b}\,w\,\overset{j}{a}\,b^{n - j}$, where possibly $i = 0$ or $j = n$.
\end{enumerate}
Thus, if any of these four cases hold, then we have $x_{i} = b$ and $x_{j} = a$.

Conversely, if we have $x_{i} = b$, $x_{j} = a$, $y_{i} = a$, and $y_{j} = b$, then we can choose $i$ and $j$ as close together as possible. It follows that $x_{i} \dots x_{j} = bwa$ and $y_{i} \dots y_{j} = awb$ for some common, possibly empty, subword $w$, otherwise we can find some $i$ and $j$ closer together. Then $\word{M}$ must lie in one of the cases above and $\word{N}$ must lie in one of the factor module cases in Lemma~\ref{lem:pibar_sub_fac}. One can then apply Lemma~\ref{lem:pibar_tau} and the logic runs backwards.
\end{proof}

The proof is related to the description of maps between string modules in \cite{cb_map}.

\subsection{Bijection for triangulations}

The next result shows how extensions between indecomposable presilting complexes are encoded in the words of the corresponding internal $n$-simplices.

\begin{proposition}\label{prop:ext}
Suppose that $P^{\bullet}$ and $Q^{\bullet}$ are indecomposable two-term presilting complexes over $\preproj$, where $\word{P^{\bullet}} = X = x_{0}x_{1} \dots x_{n}$ and $\word{Q^{\bullet}} = Y = y_{0}y_{1} \dots y_{n}$. Then $\Hom_{\kp}(P^{\bullet}, Q^{\bullet}[1]) \neq 0$ if and only if there exist $i$ and $j$ with $i < j$ such that $x_{i} = b$, $x_{j} = a$, $y_{i} = a$, and $y_{j} = b$.
\end{proposition}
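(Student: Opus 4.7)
The plan is to compute $\Hom_{\kp}(P^{\bullet}, Q^{\bullet}[1])$ by decomposing the complexes via the AIR correspondence~\cite{air} and translating the result into the combinatorial conditions provided by Lemmas~\ref{lem:pibar_ext} and~\ref{lem:pibar_comp_factors}. Using Proposition~\ref{prop:indec_bij}, I would write $P^{\bullet} \cong (P_{M}^{-1} \to P_{M}^{0}) \oplus P_{M}[1]$ where $(M, P_{M})$ is the associated indecomposable $\tau$-rigid pair (so exactly one of $M, P_{M}$ is zero), and similarly decompose $Q^{\bullet}$ via $(N, P_{N})$. Expanding $\Hom_{\kp}(P^{\bullet}, Q^{\bullet}[1])$ as a direct sum of four components, three vanish by degree reasons or because $P_{M}, P_{N}$ are projective, leaving two potentially non-zero summands: one tied (via Auslander--Reiten duality) to $\Hom_{\pibar}(N, \tau M)$, and the other equal to $\Hom_{\pibar}(P_{M}, N)$. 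The identification of which pieces of the AIR $\tau$-rigidity package correspond to which hom summand follows from writing out $T_{M} \oplus T_{N}$ being presilting as the conjunction of the two one-sided extension conditions.

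Next I would transfer from $\preproj$ to $\pibar$ using~\cite[Theorem~11]{ejr} and Lemma~\ref{lem:pibar_indec}; indecomposable $\tau$-rigid $\preproj$-modules in our setting are naturally $\pibar$-modules, so the relevant hom and $\operatorname{Ext}$ groups may be computed inside $\modules \pibar$. The first summand is non-zero iff the combinatorial condition of Lemma~\ref{lem:pibar_ext} holds, and inspecting that proof shows the witnesses necessarily satisfy $i < j$, since they identify subwords $bwa$ and $awb$ of $\word{M}$ and $\word{N}$. For the second summand, $\Hom_{\pibar}(P_{M}, N) \neq 0$ iff $N$ is supported at the top vertex $k$ of $P_{M}$; since $\word{P^{\bullet}} = b^{k} a^{n+1-k}$ in this shifted-projective case, the combinatorial condition ``$\exists i < j$ with $x_{i} = b$, $x_{j} = a$, $y_{i} = a$, $y_{j} = b$'' collapses to ``$\exists i < k \leq j$ with $y_{i} = a$, $y_{j} = b$'', which is precisely the support criterion of Lemma~\ref{lem:pibar_comp_factors}.

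The argument finishes with a case split on whether $M$ or $P_{M}$ is zero (and similarly for $Q^{\bullet}$), checking each of the four combinations. The hard part will be the asymmetry of the claim: we must correctly attach each of the AIR conditions $\Hom(M, \tau N) = 0$, $\Hom(N, \tau M) = 0$, $\Hom(P_{M}, N) = 0$, $\Hom(P_{N}, M) = 0$ to $\Hom_{\kp}(P^{\bullet}, Q^{\bullet}[1])$ rather than to $\Hom_{\kp}(Q^{\bullet}, P^{\bullet}[1])$, and ensure the $i < j$ ordering is consistently obtained across all cases. A secondary technical point is justifying the passage of the relevant hom-computations from $\preproj$ to $\pibar$, which I expect to follow from the agreement of $g$-vectors~(Lemma~\ref{lem:pibar_indec}) together with standard properties of the tensor functor $-\otimes_{\preproj} \pibar$.
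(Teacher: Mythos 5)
Your proposal follows essentially the same route as the paper's proof: reduce to $\pibar$ via \cite[Theorem~11]{ejr}, split into cases according to whether each complex corresponds to a module or a shifted projective, identify the two potentially non-vanishing contributions with $\Hom_{\pibar}(N, \tau M)$ (handled by Lemma~\ref{lem:pibar_ext}) and $\Hom_{\pibar}(P_{M}, N)$ (handled by the support criterion of Lemma~\ref{lem:pibar_comp_factors} applied to $X = b^{k}a^{n+1-k}$), and check that the witnesses satisfy $i < j$. The decomposition, the key lemmas invoked, and the attention to the asymmetry of the one-sided extension condition all match the paper's argument.
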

\begin{proof}
By \cite[Theorem~11]{ejr}, we have that $\Hom_{\kp}(P^{\bullet}, Q^{\bullet}) \neq 0$ if and only if $\Hom_{\kpb}(\overline{P}^{\bullet}, \overline{Q}^{\bullet}) \neq 0$, so we reason in terms of $\pibar$ instead. Let $(M, M')$ and $(N, N')$ be the indecomposable $\tau$-rigid pairs over $\pibar$ corresponding to $\overline{P}^{\bullet}$ and $\overline{Q}^{\bullet}$.

We first deal with the cases where either $M = 0$ or $N = 0$. If $N = 0$, then $\overline{Q}^{\bullet} = (\overline{Q}^{-1} \to 0)$ and $Y = b^{k}a^{n + 1 - k}$. Then $\Hom_{\kpb}(\overline{P}^{\bullet}, \overline{Q}^{\bullet}[1]) = 0$ and there can be no $i < j$ with $y_{i} = a$ and $y_{j} = b$, which solves this case.

If $M = 0$ and $N \neq 0$, then, by \cite{air}, we have that $\Hom_{\kpb}(\overline{P}^{\bullet}, \overline{Q}^{\bullet}) \neq 0$ if and only if $\Hom_{\pibar}(M', N) \neq 0$. We have that $M' = \overline{P}_{k}$ for some $k$. Hence $\Hom_{\pibar}(M', N) \neq 0$ if and only if $N$ is supported at the vertex $k$, which is the case if and only if there exist $i \leqslant k < j$ such that $y_{i} = a$ and $y_{j} = b$. Since $X = \word{0, \overline{P}_{k}} = b^{k}a^{n + 1 - k}$, we have that $x_{i} = b$ and $x_{j} = a$, as desired.

We can now assume that $M' = 0$ and $N' = 0$, so that $\Hom_{\kpb}(\overline{P}^{\bullet}, \overline{Q}^{\bullet}) \neq 0$ if and only if $\Hom_{\kpb}(N, \tau M) \neq 0$. By Lemma~\ref{lem:pibar_ext}, this is the case if and only if there exist $i$ and $j$ with $i < j$ such that $x_{i} = b$, $x_{j} = a$, $y_{i} = a$, and $y_{j} = b$.
\end{proof}

This proposition therefore gives a simple criterion for the existence of an extension between indecomposable presilting complexes over $\preproj$ and $\pibar$.

\begin{corollary}
Suppose that $P^{\bullet}$ and $Q^{\bullet}$ are indecomposable two-term presilting complexes over $\preproj$ corresponding to internal $n$-simplices $\Delta_{X}$ and $\Delta_{Y}$ in $\prism$. Then $P^{\bullet} \oplus Q^{\bullet}$ is presilting if and only if $\Delta_{X}$ and $\Delta_{Y}$ do not intersect in their interiors.
\end{corollary}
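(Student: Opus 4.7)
The plan is to deduce the corollary from Proposition~\ref{prop:ext} together with the explicit description of circuits of $\prism$ recalled in the background section.

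First, since each of $P^{\bullet}$ and $Q^{\bullet}$ is individually presilting, the complex $P^{\bullet} \oplus Q^{\bullet}$ is presilting if and only if both $\Hom_{\kp}(P^{\bullet}, Q^{\bullet}[1]) = 0$ and $\Hom_{\kp}(Q^{\bullet}, P^{\bullet}[1]) = 0$. I would apply Proposition~\ref{prop:ext} to each of these Hom-spaces. The first vanishes iff there do not exist $i < j$ with $x_{i} = b$, $x_{j} = a$, $y_{i} = a$, $y_{j} = b$, and the second vanishes iff there do not exist $i < j$ with $y_{i} = b$, $y_{j} = a$, $x_{i} = a$, $x_{j} = b$. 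Combining, $P^{\bullet} \oplus Q^{\bullet}$ fails to be presilting if and only if there exist indices $i \neq j$ such that $\{x_{i}, y_{i}\} = \{x_{j}, y_{j}\} = \{a, b\}$ with the letters swapped between the two positions.

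Next I would translate this combinatorial condition into the geometric statement. Recall from the background that the circuits of $\prism$ are precisely the pairs $(\{a_{i}, b_{j}\}, \{a_{j}, b_{i}\})$ for $i \neq j$. Since $\Delta_{X}$ contains the vertex $a_{k}$ (resp.\ $b_{k}$) exactly when $x_{k} = a$ (resp.\ $x_{k} = b$), the existence of indices $i \neq j$ with $x_{i} = a$, $x_{j} = b$, $y_{i} = b$, $y_{j} = a$ is exactly the condition that $\Delta_{X}$ contains one side $\{a_{i}, b_{j}\}$ of a circuit while $\Delta_{Y}$ contains the other side $\{a_{j}, b_{i}\}$, or vice versa. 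This matches the condition derived above from Proposition~\ref{prop:ext}, after possibly interchanging $i$ and $j$.

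Finally, I would invoke the standard fact that two simplices in a point configuration intersect in their relative interiors if and only if their vertex sets split a circuit in this manner; this is simply the Radon partition for the affine dependence given by the circuit. Thus the combinatorial obstruction to $P^{\bullet} \oplus Q^{\bullet}$ being presilting is equivalent to $\Delta_{X}$ and $\Delta_{Y}$ having a non-empty interior intersection, which proves the corollary. The only subtle point to take care of is the asymmetry $i < j$ in the statement of Proposition~\ref{prop:ext}: this disappears once one applies it to both $\Hom(P^{\bullet}, Q^{\bullet}[1])$ and $\Hom(Q^{\bullet}, P^{\bullet}[1])$, which is why it is important to use both vanishing conditions.
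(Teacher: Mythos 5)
Your proposal is correct and follows essentially the same route as the paper's proof: both sides reduce the question to Proposition~\ref{prop:ext} applied in the two Hom directions and to the description of the circuits $(\{a_{i}, b_{j}\}, \{a_{j}, b_{i}\})$ of $\prism$, with interior intersection characterised by the two simplices splitting a circuit. Your write-up merely makes explicit two points the paper leaves implicit, namely the reduction of presiltingness of $P^{\bullet} \oplus Q^{\bullet}$ to the vanishing of the two cross Hom-spaces and the symmetrisation of the condition $i < j$, both of which are handled correctly.
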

\begin{proof}
The circuits of $\prism$ correspond to $(\{a_{i}, b_{j}\}, \{a_{j}, b_{i}\})$ where $i \neq j$. These are crossing diagonals in the face $\{a_{i}, a_{j}, b_{i}, b_{j}\}$ of $\prism$. Since internal $n$-simplices $\Delta_{x_{0}x_{1} \dots x_{n}}$ and $\Delta_{y_{0}y_{1} \dots y_{n}}$ intersect in their interiors precisely if each contains one half of a circuit, we obtain that these simplices intersect if and only if there exist $i$ and $j$ such that $x_{i} = y_{j} = a$ and $x_{j} = y_{i} = b$. The result then follows by Proposition~\ref{prop:ext}.
\end{proof}

\begin{corollary}\label{cor:triang_bij}
There are bijections between $\tri(\prism)$, $\twosilt\preproj$, and $\sttilt\preproj$.
\end{corollary}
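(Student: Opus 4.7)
The plan is to lift the bijection of Proposition~\ref{prop:indec_bij} to maximal compatible collections, using the preceding corollary to translate between compatibility of presilting summands and non-intersection of internal simplices. Let $\mathcal{N}$ denote the set of $n$-element collections of pairwise non-intersecting internal $n$-simplices in $\prism$. Given a basic $T = T_1 \oplus \cdots \oplus T_n \in \twosilt\preproj$ (which has $|T| = |\preproj| = n$ indecomposable summands), the assignment $\Phi(T) := \{\Delta_{\word{T_i}} : 1 \leqslant i \leqslant n\}$ lands in $\mathcal{N}$: the preceding corollary applied pairwise shows that the simplices do not intersect in their interiors. I would then argue that $\Phi$ is a bijection $\twosilt\preproj \leftrightarrow \mathcal{N}$. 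It is injective because silting complexes are determined by their indecomposable summands, and Proposition~\ref{prop:indec_bij} is itself a bijection. It is surjective because, for any $\mathcal{C} \in \mathcal{N}$, the corresponding direct sum of $n$ indecomposable two-term presilting complexes has vanishing $\Hom(-, -[1])$ pairwise (again by the preceding corollary), hence vanishes in total, so the sum is presilting; with $n$ summands and $|\preproj| = n$, it is silting by \cite[Proposition~3.3(b)]{air}.

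Next I would identify $\mathcal{N}$ with $\tri(\prism)$ by sending a triangulation to its set of internal codimension-one simplices. From Section~\ref{sect:back:triang:perm}, the triangulation $\mathcal{T}_w$ for $w = i_0 \cdots i_n$ has exactly $n$ internal $n$-simplices, namely $\{a_{i_0}, \ldots, a_{i_j}, b_{i_{j+1}}, \ldots, b_{i_n}\}$ for $0 \leqslant j \leqslant n-1$, and $\mathcal{T}_w$ is reconstructible from these (consecutive internal facets differ in exactly one vertex, an $a$ becoming a $b$, which recovers the permutation $w$). Thus this assignment defines an injection $\Psi \colon \tri(\prism) \to \mathcal{N}$. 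Since $|\tri(\prism)| = (n+1)! = |\twosilt\preproj| = |\mathcal{N}|$ by \cite{gkz-book}, \cite{mizuno-preproj}, and \cite{air}, the injection $\Psi$ is in fact a bijection, and composing $\Psi^{-1} \circ \Phi$ yields the desired bijection $\twosilt\preproj \leftrightarrow \tri(\prism)$. The final bijection with $\sttilt\preproj$ is \cite[Theorem~3.2]{air}.

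I expect the main conceptual step to be checking that every $n$-element non-intersecting collection is realised by a triangulation; the cardinality argument above sidesteps a direct geometric reconstruction by reducing this to the already-known enumerations of both sides by $\symm$.
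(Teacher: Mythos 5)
Your argument is correct, and its algebraic half coincides with the paper's: two-term silting complexes are exactly the two-term presilting complexes with $n$ indecomposable summands (\cite[Proposition~3.3]{air}, \cite{aihara}), pairwise compatibility of summands is translated into non-intersection of interiors via Proposition~\ref{prop:ext} and its corollary, and additivity of $\Hom$ gives surjectivity onto the $n$-element non-intersecting collections $\mathcal{N}$. Where you genuinely diverge is the geometric half. The paper identifies $\mathcal{N}$ with $\tri(\prism)$ in one stroke by citing \cite[Chapter~7, Proposition~3.10(a)]{gkz-book}, which says outright that triangulations of $\prism$ correspond to sets of $n$ internal $n$-simplices with pairwise disjoint interiors. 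You instead prove only the injection $\tri(\prism) \hookrightarrow \mathcal{N}$, by observing that the $n$ internal facets of $\mathcal{T}_{w}$ are totally ordered by their number of $a$-vertices and so reconstruct $w$, and then close the gap by counting: $|\tri(\prism)| = (n+1)! = |\twosilt\preproj| = |\mathcal{N}|$ using the $\symm$-classifications of \cite{gkz-book} and \cite{mizuno-preproj}. Both routes are sound. The paper's is more direct and keeps the corollary independent of Mizuno's enumeration; yours leans on a cardinality coincidence that is special to this polytope--algebra pair (so it would not transfer to a setting where only one side is enumerated), but in return it yields an explicit recipe for recovering a triangulation from its set of internal $n$-simplices, which the paper leaves implicit in the citation.
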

\begin{proof}
It follows from \cite[Proposition~3.3]{air} and \cite{aihara} that two-term silting complexes over $\preproj$ are precisely two-term presilting complexes over $\preproj$ with $n$ non-isomorphic indecomposable summands. Hence, by Proposition~\ref{prop:indec_bij} and Proposition~\ref{prop:ext}, we have that two-term silting complexes over $\preproj$ correspond to collections of $n$ internal $n$-simplices in $\prism$ which do not intersect each other's interiors. It follows from \cite[Chapter~7, Proposition~3.10(a)]{gkz-book} that triangulations of $\prism$ correspond to sets of $n$ internal $n$-simplices which do not intersect each other's interiors.
\end{proof}

Given a support $\tau$-tilting pair $(M, P)$ over $\preproj$, we write $\mathcal{T}(M, P)$ for the corresponding triangulation of $\prism$. Likewise, we write $\mathcal{T}(P^{\bullet})$ for the triangulation of $\prism$ corresponding to a two-term silting complex $P^{\bullet}$ over $\preproj$.

\begin{example}
In each row of Figure~\ref{fig:big}, we show a support $\tau$-tilting pair over $\argpp{2}$, a two-term silting complex over $\argpp{2}$, a permutation in $\argsymm{3}$, a pair of three-letter words in the alphabet $\{a, b\}$, and a triangulation of $\argpris{2}$, all of which correspond to each other under the bijections.
\end{example}

\begin{figure}
\caption{Support $\tau$-tilting pairs, two-term silting complexes, permutations, words, and triangulations of $\Delta_{2} \times \Delta_{1}$}\label{fig:big}
\[
\begin{tikzpicture}[scale=0.95,xscale=0.95,yscale=0.9]

\begin{scope}[shift={(0,0)},scale=0.6]

\coordinate (b0) at (0,0);
\coordinate (b1) at (1.5,-1);
\coordinate (b2) at (3,0.5);
\coordinate (a0) at (0,3);
\coordinate (a1) at (1.5,2);
\coordinate (a2) at (3,3.5);

\draw[fill=red,fill opacity=0.5] (a0) -- (a1) -- (b2) -- (a0);
\draw[fill=blue,fill opacity=0.5] (a0) -- (b1) -- (b2) -- (a0);

\node [left = 1mm of a0] {$a_{0}$};
\node [below right = 1mm of a1] {$a_{1}$};
\node [below right = 1mm of a2] {$a_{2}$};
\node [left = 1mm of b0] {$b_{0}$};
\node [below right = 1mm of b1] {$b_{1}$};
\node [below right = 1mm of b2] {$b_{2}$};

\draw (a0) -- (a1) -- (a2) -- (a0);
\draw[dashed] (b0) -- (b2);
\draw (b0) -- (b1) -- (b2);
\draw (a0) -- (b0);
\draw (a1) -- (b1);
\draw (a2) -- (b2);

% Words

\node at (-3,1.5) {$aab, abb$};

% Permutation

\node at (-6,1.5) {$123$};

% Two-term silting

\node at (-11,1.5) {$\left(0 \to \tcs{\phantom{1}2\\1\phantom{2}}\right) \oplus \left(0 \to \tcs{1\phantom{2}\\\phantom{1}2}\right)$};

% tau-tilting pairs

\node at (-17,1.5) {$\left(\tcs{\phantom{1}2\\1\phantom{2}} \oplus \tcs{1\phantom{2}\\\phantom{1}2}, \, 0 \right)$};

\end{scope}

%%%%%%%%%%%%%%%%%%%%%%%%%%%%%

\begin{scope}[shift={(0,-4)},scale=0.6]

\coordinate (b0) at (0,0);
\coordinate (b1) at (1.5,-1);
\coordinate (b2) at (3,0.5);
\coordinate (a0) at (0,3);
\coordinate (a1) at (1.5,2);
\coordinate (a2) at (3,3.5);

\draw[fill=red,fill opacity=0.5] (a0) -- (a1) -- (b2) -- (a0);
\draw[fill=blue,fill opacity=0.5] (b0) -- (a1) -- (b2) -- (b0);

\node [left = 1mm of a0] {$a_{0}$};
\node [below right = 1mm of a1] {$a_{1}$};
\node [below right = 1mm of a2] {$a_{2}$};
\node [left = 1mm of b0] {$b_{0}$};
\node [below right = 1mm of b1] {$b_{1}$};
\node [below right = 1mm of b2] {$b_{2}$};

\draw (a0) -- (a1) -- (a2) -- (a0);
\draw[dashed] (b0) -- (b2);
\draw (b0) -- (b1) -- (b2);
\draw (a0) -- (b0);
\draw (a1) -- (b1);
\draw (a2) -- (b2);

% Words

\node at (-3,1.5) {$aab, bab$};

% Permutation

\node at (-6,1.5) {$213$};

% Two-term silting

\node at (-11,1.5) {$\left(0 \to \tcs{\phantom{1}2\\1\phantom{2}}\right) \oplus \left(\tcs{1\phantom{2}\\\phantom{1}2} \to \tcs{\phantom{1}2\\1\phantom{2}}\right)$};

% tau-tilting pairs

\node at (-17,1.5) {$\left(\tcs{\phantom{1}2\\1\phantom{2}} \oplus 2, \, 0 \right)$};

\end{scope}

%%%%%%%%%%%%%%%%%%%%%%%%%%%%%

\begin{scope}[shift={(0,-8)},scale=0.6]

\coordinate (b0) at (0,0);
\coordinate (b1) at (1.5,-1);
\coordinate (b2) at (3,0.5);
\coordinate (a0) at (0,3);
\coordinate (a1) at (1.5,2);
\coordinate (a2) at (3,3.5);

\draw[fill=red,fill opacity=0.5] (a0) -- (b1) -- (b2) -- (a0);
\draw[fill=blue,fill opacity=0.5] (a0) -- (b1) -- (a2) -- (a0);

\node [left = 1mm of a0] {$a_{0}$};
\node [below right = 1mm of a1] {$a_{1}$};
\node [below right = 1mm of a2] {$a_{2}$};
\node [left = 1mm of b0] {$b_{0}$};
\node [below right = 1mm of b1] {$b_{1}$};
\node [below right = 1mm of b2] {$b_{2}$};

\draw (a0) -- (a1) -- (a2) -- (a0);
\draw[dashed] (b0) -- (b2);
\draw (b0) -- (b1) -- (b2);
\draw (a0) -- (b0);
\draw (a1) -- (b1);
\draw (a2) -- (b2);

% Words

\node at (-3,1.5) {$aba, abb$};

% Permutation

\node at (-6,1.5) {$132$};

% Two-term silting

\node at (-11,1.5) {$\left(\tcs{\phantom{1}2\\1\phantom{2}} \to \tcs{1\phantom{2}\\\phantom{1}2}\right) \oplus \left(0 \to \tcs{1\phantom{2}\\\phantom{1}2}\right)$};

% tau-tilting pairs

\node at (-17,1.5) {$\left(1 \oplus \tcs{1\phantom{2}\\\phantom{1}2}, \, 0 \right)$};

\end{scope}

%%%%%%%%%%%%%%%%%%%%%%%%%%%%%

\begin{scope}[shift={(0,-12)},scale=0.6]

\coordinate (b0) at (0,0);
\coordinate (b1) at (1.5,-1);
\coordinate (b2) at (3,0.5);
\coordinate (a0) at (0,3);
\coordinate (a1) at (1.5,2);
\coordinate (a2) at (3,3.5);

\draw[fill=red,fill opacity=0.5] (b0) -- (a1) -- (a2) -- (b0);
\draw[fill=blue,fill opacity=0.5] (b0) -- (a1) -- (b2) -- (b0);

\node [left = 1mm of a0] {$a_{0}$};
\node [below right = 1mm of a1] {$a_{1}$};
\node [below right = 1mm of a2] {$a_{2}$};
\node [left = 1mm of b0] {$b_{0}$};
\node [below right = 1mm of b1] {$b_{1}$};
\node [below right = 1mm of b2] {$b_{2}$};

\draw (a0) -- (a1) -- (a2) -- (a0);
\draw[dashed] (b0) -- (b2);
\draw (b0) -- (b1) -- (b2);
\draw (a0) -- (b0);
\draw (a1) -- (b1);
\draw (a2) -- (b2);

% Words

\node at (-3,1.5) {$baa, bab$};

% Permutation

\node at (-6,1.5) {$231$};

% Two-term silting

\node at (-11,1.5) {$\left(\tcs{1\phantom{2}\\\phantom{1}2} \to 0\right) \oplus \left(\tcs{1\phantom{2}\\\phantom{1}2} \to \tcs{\phantom{1}2\\1\phantom{2}}\right)$};

% tau-tilting pairs

\node at (-17,1.5) {$\left(2, \, \tcs{1\phantom{2}\\\phantom{1}2} \right)$};

\end{scope}

%%%%%%%%%%%%%%%%%%%%%%%%%%%%%

\begin{scope}[shift={(0,-16)},scale=0.6]

\coordinate (b0) at (0,0);
\coordinate (b1) at (1.5,-1);
\coordinate (b2) at (3,0.5);
\coordinate (a0) at (0,3);
\coordinate (a1) at (1.5,2);
\coordinate (a2) at (3,3.5);

\draw[fill=red,fill opacity=0.5] (b0) -- (b1) -- (a2) -- (b0);
\draw[fill=blue,fill opacity=0.5] (a0) -- (b1) -- (a2) -- (a0);

\node [left = 1mm of a0] {$a_{0}$};
\node [below right = 1mm of a1] {$a_{1}$};
\node [below right = 1mm of a2] {$a_{2}$};
\node [left = 1mm of b0] {$b_{0}$};
\node [below right = 1mm of b1] {$b_{1}$};
\node [below right = 1mm of b2] {$b_{2}$};

\draw (a0) -- (a1) -- (a2) -- (a0);
\draw[dashed] (b0) -- (b2);
\draw (b0) -- (b1) -- (b2);
\draw (a0) -- (b0);
\draw (a1) -- (b1);
\draw (a2) -- (b2);

% Words

\node at (-3,1.5) {$aba, bba$};

% Permutation

\node at (-6,1.5) {$312$};

% Two-term silting

\node at (-11,1.5) {$\left(\tcs{\phantom{1}2\\1\phantom{2}} \to \tcs{1\phantom{2}\\\phantom{1}2}\right) \oplus \left(\tcs{\phantom{1}2\\1\phantom{2}} \to 0\right)$};

% tau-tilting pairs

\node at (-17,1.5) {$\left(1, \, \tcs{\phantom{1}2\\1\phantom{2}} \right)$};

\end{scope}

%%%%%%%%%%%%%%%%%%%%%%%%%%%%%

\begin{scope}[shift={(0,-20)},scale=0.6]

\coordinate (b0) at (0,0);
\coordinate (b1) at (1.5,-1);
\coordinate (b2) at (3,0.5);
\coordinate (a0) at (0,3);
\coordinate (a1) at (1.5,2);
\coordinate (a2) at (3,3.5);

\draw[fill=red,fill opacity=0.5] (b0) -- (a1) -- (a2) -- (b0);
\draw[fill=blue,fill opacity=0.5] (b0) -- (b1) -- (a2) -- (b0);

\node [left = 1mm of a0] {$a_{0}$};
\node [below right = 1mm of a1] {$a_{1}$};
\node [below right = 1mm of a2] {$a_{2}$};
\node [left = 1mm of b0] {$b_{0}$};
\node [below right = 1mm of b1] {$b_{1}$};
\node [below right = 1mm of b2] {$b_{2}$};

\draw (a0) -- (a1) -- (a2) -- (a0);
\draw[dashed] (b0) -- (b2);
\draw (b0) -- (b1) -- (b2);
\draw (a0) -- (b0);
\draw (a1) -- (b1);
\draw (a2) -- (b2);

% Words

\node at (-3,1.5) {$baa, bba$};

% Permutation

\node at (-6,1.5) {$321$};

% Two-term silting

\node at (-11,1.5) {$\left(\tcs{1\phantom{2}\\\phantom{1}2} \to 0\right) \oplus \left(\tcs{\phantom{1}2\\1\phantom{2}} \to 0\right)$};

% tau-tilting pairs

\node at (-17,1.5) {$\left(0, \, \tcs{1\phantom{2}\\\phantom{1}2} \oplus \tcs{\phantom{1}2\\1\phantom{2}}\right)$};

\end{scope}

%%%%%%%%%%%%%%%%%%%%%%%%%%%%%

\end{tikzpicture}
\]
\end{figure}

\begin{proposition}\label{prop:flip}
Under the bijection between $\twosilt\preproj$ and $\tri(\prism)$, mutations of two-term silting complexes correspond to bistellar flips of triangulations.
\end{proposition}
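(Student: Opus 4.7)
The plan is to translate the definitions of mutation and bistellar flip through the bijection of Corollary~\ref{cor:triang_bij}, showing that both operations correspond to swapping a single element (one indecomposable summand, respectively one internal $n$-simplex), and that this single-element swap realizes a bistellar flip via the circuit structure of $\prism$.

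Given a mutation $T = E \oplus X \to T' = E \oplus Y$ in $\twosilt\preproj$, with $X, Y$ indecomposable and $X \not\cong Y$, I would first apply Proposition~\ref{prop:indec_bij} summand-by-summand to conclude that $\mathcal{T}(T)$ and $\mathcal{T}(T')$ share the $n - 1$ internal $n$-simplices coming from $E$ and differ only in $\Delta_{\word{X}}$ versus $\Delta_{\word{Y}}$. Next, since $T$ is silting and hence maximal presilting by \cite{aihara}, adjoining the non-summand $Y$ to $T$ would destroy presiltingness, so in particular $X \oplus Y$ is not presilting. The corollary preceding Corollary~\ref{cor:triang_bij} then tells us that $\Delta_{\word{X}}$ and $\Delta_{\word{Y}}$ meet in their interiors, and by Proposition~\ref{prop:ext} together with the classification of circuits of $\prism$ this is witnessed by a circuit $(\{a_{i}, b_{j}\}, \{a_{j}, b_{i}\})$ with one half contained in $\Delta_{\word{X}}$ and the other in $\Delta_{\word{Y}}$.

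To exhibit the bistellar flip, I would then locate the two top-dimensional $(n+1)$-simplices of $\mathcal{T}(T)$ which have $\Delta_{\word{X}}$ as a facet, say $\Delta_{\word{X}} \cup \{v\}$ and $\Delta_{\word{X}} \cup \{v'\}$, and show that their union is a convex polytope admitting exactly two triangulations, namely via $\Delta_{\word{X}}$ or via $\Delta_{\word{Y}}$. Replacing these two simplices by their union in $\mathcal{T}(T)$ then yields an almost triangulation $\mathcal{S}$ whose only refinements are $\mathcal{T}(T)$ and $\mathcal{T}(T')$, establishing the desired bistellar flip. Conversely, I would argue that every bistellar flip of $\prism$ is associated to some circuit $(\{a_{i}, b_{j}\}, \{a_{j}, b_{i}\})$ and by the same analysis changes exactly one internal $n$-simplex, so it lifts through Corollary~\ref{cor:triang_bij} to the replacement of a single indecomposable summand, that is, to a mutation.

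The main obstacle I anticipate is the geometric verification in the third paragraph: identifying the apex vertices $v$ and $v'$ from the combinatorial word data, and confirming that the merged polytope carries exactly two triangulations which extend to $\mathcal{T}(T)$ and $\mathcal{T}(T')$. I expect this to reduce to a controlled case analysis of how the circuit $(\{a_{i}, b_{j}\}, \{a_{j}, b_{i}\})$ interacts with the internal $n$-simplices coming from $E$, using Remark~\ref{rmk:psilt_map} to describe the relevant maps. Alternatively, one can sidestep the explicit geometry by invoking the regularity of both the mutation graph and the flip graph (each has degree $n$ at every vertex) together with the edge-preservation established above, forcing the full correspondence of edges.
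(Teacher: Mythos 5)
Your proposal is correct and takes essentially the same route as the paper: both reduce the claim, via Corollary~\ref{cor:triang_bij}, to the statement that two triangulations of $\prism$ are related by a bistellar flip if and only if they differ in exactly one internal $n$-simplex, which on the silting side is exactly differing in one indecomposable summand. The paper disposes of that last equivalence in a single line (the subdivision formed by the $n-1$ common internal $n$-simplices is an almost triangulation whose only two refinements are the given triangulations), so the explicit circuit geometry and the regularity count you sketch as backup are more detailed than what is actually carried out there.
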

\begin{proof}
Since two-term silting complexes are related by mutation if and only if they differ by only one indecomposable summand, it follows from Corollary~\ref{cor:triang_bij} that two silting complexes are related by a mutation if and only if the corresponding triangulations differ by only one codimension one internal simplex. In turn, it is then true that two triangulations $\mathcal{T}$ and $\mathcal{T}'$ are bistellar flips of each other if and only if they differ by an internal $n$-simplex. Indeed, if $\mathcal{T}$ and $\mathcal{T}'$ are two triangulations, with $\mathcal{S}$ the subdivision given by their common internal $n$-simplices, then $\mathcal{S}$ is an almost triangulation if and only if it contains $n - 1$ internal $n$-simplices.
\end{proof}

\subsection{Compatibility with permutations}

In this section, we show that our bijection between the support $\tau$-tilting pairs $\sttilt\preproj$ and triangulations $\tri(\prism)$ is compatible with the existing bijections between $\symm$ and $\sttilt\preproj$ and between $\symm$ and $\tri(\prism)$.

\begin{proposition}
For all $w \in \symm$, we have that $\mathcal{T}(I_{w}, P_{w}) = \mathcal{T}_{w}$.
\end{proposition}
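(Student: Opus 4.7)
The plan is to argue by induction on $\ell(w)$, the length of $w$ in $\symm$. For the base case $w = e$, we have $(I_e, P_e) = (\preproj, 0)$ with indecomposable summands $P_1, \ldots, P_n$; each $P_i$ has projective presentation $0 \to P_i$, so its $g$-vector is the standard basis vector $e_i$ and hence, by Proposition~\ref{prop:g_vec}, its word is $a^i b^{n+1-i}$. Meanwhile, $\mathcal{T}_e$ has internal $n$-simplices $\{a_0, \ldots, a_j, b_{j+1}, \ldots, b_n\}$ for $j = 0, \ldots, n-1$, with corresponding words $a^{j+1} b^{n-j}$. These two collections of words coincide, so $\mathcal{T}(I_e, P_e) = \mathcal{T}_e$.

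For the inductive step, suppose the claim holds for $w$ and consider $w' = s_k w$ (with the multiplication convention that makes Mizuno's bijection compatible with weak-order covers and silting mutations) satisfying $\ell(w') = \ell(w) + 1$. A direct combinatorial analysis of the effect of multiplication by $s_k$ on the one-line notation of $w$ shows that $\mathcal{T}_{w'}$ agrees with $\mathcal{T}_w$ on every internal $n$-simplex except the one at index $j = k-1$, which is replaced by its bistellar flip across the circuit determined by the two positions affected by $s_k$. On the silting side, by Mizuno's theorem, $(I_{w'}, P_{w'})$ is obtained from $(I_w, P_w)$ by a single silting mutation. Proposition~\ref{prop:flip}, together with the inductive hypothesis, then implies that $\mathcal{T}(I_{w'}, P_{w'})$ is a bistellar flip of $\mathcal{T}_w$. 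It therefore remains only to verify that the same internal simplex is flipped on both sides.

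The cleanest way to carry out this identification is via $g$-vectors. Using the reflection-representation action of $\symm$ on $K_0(\kp) \cong \mathbb{Z}^n$, in which each simple reflection $s_i$ fixes $e_j$ for $j \neq i$ and sends $e_i \mapsto e_{i-1} + e_{i+1} - e_i$ (with the convention $e_0 = e_{n+1} = 0$), a straightforward induction using Proposition~\ref{prop:g_vec} shows that the $g$-vector of the $(i-1)$-th internal simplex of $\mathcal{T}_w$ is $w(e_i)$. On the silting side, the standard description of the $g$-vector fan of $\preproj$ as the Coxeter fan of type $A_n$ identifies the rays of the chamber associated to $(I_w, P_w)$ with $\{w(e_1), \ldots, w(e_n)\}$, and in particular pinpoints the summand with $g$-vector $w(e_k)$ as the one mutated under $w \mapsto s_k w$. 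Comparing $g$-vectors then matches the mutated summand with the flipped simplex, closing the induction. The main obstacle is the chamber-theoretic description of the $g$-vector fan of $\preproj$: this is a standard but nontrivial fact for preprojective algebras of Dynkin type, and if one prefers not to invoke it, it can be built up by a parallel induction using the silting mutation formula.
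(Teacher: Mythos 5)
Your base case is fine and coincides with the paper's, but your inductive step takes a genuinely different route from the paper's and, as written, has real gaps. The paper does not induct along mutations at all: it uses the recursion $I_{w's_i}=I_{w'}I_i$ and directly matches, via the words of Proposition~\ref{prop:indec_bij}, the effect of multiplying by $I_i$ on \emph{every} summand (removing $S_i$ from the top turns $ab$ into $ba$ at positions $i-1,i$) against the effect of the corresponding transposition on \emph{every} simplex of $\mathcal{T}_{w'}$. This matters because for that recursion several summands and several simplices can change at once. For instance, with $n=2$ the length-increasing step from $102$ to $201$ (swapping the values $1$ and $2$) sends $I_{1}=(S_2\oplus P_2,\,0)$, with words $\{aab,bab\}$, to $I_1I_2=(S_1,\,P_2)$, with words $\{aba,bba\}$: both summands and both internal $2$-simplices change, so this cover is \emph{not} a single mutation or a single bistellar flip. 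Your claim that exactly one simplex changes is true only for covers of the other kind (adjacent transpositions of \emph{positions} in one-line notation, which are the ones matching silting mutation), so the parenthetical ``with the multiplication convention that makes Mizuno's bijection compatible with weak-order covers and silting mutations'' is carrying essential weight, and the ``direct combinatorial analysis'' it licenses is asserted rather than performed; with the wrong convention the assertion is simply false.

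The second and more serious gap is the identification of \emph{which} simplex is flipped with \emph{which} summand is mutated. This is where all the content of the proposition sits: knowing that $\mathcal{T}_{w'}$ and $\mathcal{T}(I_{w'},P_{w'})$ are both bistellar flips of $\mathcal{T}_w$ does not determine them, since a triangulation admits up to $n$ flips. You outsource this entirely to the statement that the $g$-vector fan of $\preproj$ is the Coxeter fan with the chamber of $(I_w,P_w)$ spanned by $\{w(\omega_1),\dots,w(\omega_n)\}$. That statement is true and can be extracted from \cite{irrt} or \cite{dirrt}, but it is a substantial theorem that the paper neither proves nor needs, and you explicitly defer it; the parallel claim that the cutoff-$(i-1)$ simplex of $\mathcal{T}_w$ has $g$-vector $w(e_i)$ is likewise only asserted (and one must take care that the same left/right action convention is used on both sides, since only one choice leaves $n-1$ of the vectors literally fixed across the cover, as the unchanged simplices require). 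If both inputs are granted with consistent conventions the argument does close, and it would buy a pleasant conceptual explanation---both bijections realise the Coxeter fan---but it rests on machinery far heavier than the paper's argument, which needs only Proposition~\ref{prop:g_vec} and the explicit description of $\mathcal{T}_w$ from Section~\ref{sect:back:triang:perm}. As submitted, the inductive step is not established.
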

\begin{proof}
We show this by induction on the length of $w$. The base case is the identity permutation $e$. We have that $\mathcal{T}_{e}$ has $(n + 1)$-simplices \[\{\{a_{0}, \dots, a_{j}, b_{j}, \dots, b_{n}\} \st 0 \leqslant j \leqslant n\}.\] The internal $n$-simplices of this triangulation are \[\{ \{a_{0}, \dots, a_{j}, b_{j + 1}, \dots, b_{n}\} \st 0 \leqslant j < n \}.\] We have that $(I_{e}, P_{e}) = (\preproj, 0)$, and by Proposition~\ref{prop:indec_bij}, the word corresponding to the indecomposable projective $P_{j}$ is \[\underbrace{a \dots a}_{j}\underbrace{b \dots b}_{n - j}.\] These are the words of the internal $n$-simplices of $\mathcal{T}_{e}$, so we have $\mathcal{T}_{e} = \mathcal{T}(I_{e}, P_{e})$.

We now suppose that we have $w \in \symm$ such that $w = w's_{i}$ for some $i$, so that $I_{w} = I_{w'}I_{i}$. Hence, $I_{w}$ is obtained from $I_{w'}$ by removing composition factors in the top given by $S_{i}$, the simple $\preproj$-module at the vertex $i$. Given an indecomposable summand $M'$ of $I_{w'}$, we have, by Proposition~\ref{prop:g_vec} that $S_{i}$ occurs in the top of $M'$ if and only if $x_{i - 1}x_{i} = ab$ for $\word{M'} = x_{0}x_{1} \dots x_{n}$. Then, if $M$ is the corresponding indecomposable summand of $I_{w}$, we have that $\word{M} = x_{0}x_{1} \dots x_{i - 2}bax_{i + 1} \dots x_{n}$.

On the other hand, considering the permutations, if we let $w' = k_{0} \dots k_{j - 1}(i - 1)k_{j + 1} \dots k_{l - 1}ik_{l + 1} \dots k_{n}$, then $w = k_{0} \dots k_{j - 1}ik_{j + 1} \dots k_{l - 1}(i - 1)k_{l + 1} \dots k_{n}$. Note that, by assumption, we have $j < l$, since the length of $w$ is greater than the length of $w'$. Using the description of $\mathcal{T}_{w}$ and $\mathcal{T}_{w'}$ from Section~\ref{sect:back:triang:perm}, we have that if $X = x_{0}x_{1} \dots x_{n}$ is the word of a simplex in $\mathcal{T}_{w'}$, then $\Delta_X$ is a simplex of $\mathcal{T}_{w}$ too if and only if $x_{i - 1}x_{i} = aa$ or $x_{i - 1}x_{i} = bb$. Furthermore, if $x_{i - 1}x_{i} = ab$, then $x_{0} \dots x_{i - 2}bax_{i + 1} \dots x_{n}$ is the word of a simplex in $\mathcal{T}_{w}$. The case $x_{i - 1}x_{i} = ba$ is not possible since $i - 1$ precedes $i$ in $w$. Comparing with the previous paragraph, we see that the summands of $(I_{w'}, P_{w'})$ which change to give $(I_{w}, P_{w})$ correspond to the internal $n$-simplices of $\mathcal{T}_{w'}$ which change to give $\mathcal{T}_{w}$. Moreover, the change in the words of the simplices corresponds precisely to the change in the upper contours of the summands of the $\tau$-tilting pair. Hence, we obtain that $\mathcal{T}(I_{w}, P_{w}) = \mathcal{T}_{w}$ and the result follows by induction.
\end{proof}

\begin{remark}
Note that our results therefore also give a different way of obtaining the support $\tau$-tilting pair over $\preproj$ corresponding to a permutation in $\symm$ to the description from \cite{mizuno-preproj}. Namely, given a permutation, one uses the description of the corresponding triangulation of $\prism$ from Section~\ref{sect:back:triang:perm} to obtain a set of words in $\abseq$. One then uses Proposition~\ref{prop:indec_bij} to translate this into a support $\tau$-tilting pair over $\preproj$ by using these words to give the upper contours of the indecomposable $\tau$-rigid summands. This does not require a reduced expression for the permutation, whereas the description from \cite{mizuno-preproj} does.
\end{remark}

\bibliographystyle{alpha}
\bibliography{biblio}

\end{document}